\documentclass{aptpub}

\authornames{Deng, Schilling and Song} 
\shorttitle{Sub-geometric rates of convergence under subordination} 


\usepackage{amsmath, verbatim, color}
\usepackage{mathrsfs}                                   
\usepackage{dsfont}

\newcommand\EE{\mathds E}
\newcommand\NN{\mathds N}
\newcommand\R{\mathds R}
\newcommand\Z{\mathds Z}

\newcommand\I{\mathds 1}
\newcommand\Prob{\mathds P}

\newcommand\dup{\mathrm{d}}
\newcommand\eup{\mathrm{e}}                                

\numberwithin{equation}{section}

\begin{document}

\title{Subgeometric rates of convergence\\
for Markov processes under subordination} 

\authorone[Wuhan University]{Chang-Song Deng} 
\addressone{School of Mathematics
and Statistics, Wuhan University, Wuhan 430072, China.
Email address: dengcs@whu.edu.cn} 

\authortwo[TU Dresden]{Ren\'e L.\ Schilling}
\addresstwo{TU Dresden, Fachrichtung Mathematik,
Institut f\"ur Mathematische Stochastik, 01062 Dresden, Germany.
Email address: rene.schilling@tu-dresden.de}

\authorthree[Zhongnan University of Economics and Law]{Yan-Hong Song} 
\addressthree{School of Statistics and Mathematics, Zhongnan University of Economics and Law, Wuhan 430073, China.
Email address: songyh@znufe.edu.cn}

\footnote{\textbf{Notice:} The paper is published in \emph{Adv.\ Appl.\ Prob.} \textbf{49} (2017) 162--181. The present arXiv version v3 contains the small corrections (Lemma~\ref{lohgr}, proof of Theorem~\ref{moments} c-ii) on page~\pageref{bigtime}) mentioned in an erratum.}

\begin{abstract}
    We are interested in the rate of convergence of a subordinate Markov process to its invariant measure. Given a subordinator and the corresponding Bernstein function (Laplace exponent) we characterize the convergence rate of the subordinate Markov process; the key ingredients are the rate of convergence of the original process and the (inverse of the) Bernstein function. At a technical level, the crucial point is to bound three types of
    moments (sub-exponential, algebraic and logarithmic) for
    subordinators as time $t$ tends to infinity. At the end we discuss some concrete models and we show that subordination can dramatically change the speed of convergence to equilibrium.
\end{abstract}

\keywords{Rate of convergence; invariant measure; subordination;
moment estimates; Bernstein function; Markov process} 

\ams{60J25}{60G51;60G52;60J35} 

\section{Introduction and main result}\label{sec1} 



The notion of stochastic stability of a Markov process is of fundamental importance both in theoretical studies and in practical applications. There are various characterizations of geometric (or exponential) ergodicity, see e.g.\ \cite{MT93, MT93a, MT93b, che04, Chen05, DMT95} and the references given in these papers. In recent years, there has been considerable interest in sub-geometric (or sub-exponential) ergodicity, see for example \cite{DFMS04, for05, DMS07, dou09, But14, RW01} for developments in this direction. In this paper, we want to study sub-geometric convergence rates of Markov processes under subordination in the sense of Bochner.

Let $X=\{X_t:t\geq0\}$ be a Markov process with state space $(E,\mathscr{B}(E))$ and transition function $P^t(x,\dup y)$.
We assume that $E$ is a locally compact and separable metric space and we denote by $\mathscr{B}(E)$ the corresponding Borel $\sigma$-algebra. Let $f:E\to[1,\infty)$ be a measurable control function; the \emph{$f$-norm} of a signed measure $\mu$ on $E$ is defined as $\|\mu\|_f:=\sup_{|g|\leq f}|\mu(g)|$. Here, the supremum ranges over all measurable $g$ which are dominated by $f$ and $\mu(g) := \int g\,\dup\mu$. It is not hard to see that $\|\cdot\|_f\geq\|\cdot\|_{\operatorname{TV}}$ always holds for the total variation norm $\|\cdot\|_{\operatorname{TV}}$; if $f$ is bounded, then the norms $\|\cdot\|_f$ and $\|\cdot\|_{\operatorname{TV}}$ are even equivalent.

The convergence behaviour of a process $X$ to a stationary distribution $\pi$ in the $f$-norm can be captured by estimates of the form
\begin{equation}\label{rate1}
    \left\|P^t(x,\cdot)-\pi\right\|_f
    \leq C(x)r(t), \quad x\in E,\; t\geq 0,
\end{equation}
where $C(x)\in (0,\infty)$ is a constant depending on $x\in E$ and  $r:[0,\infty)\to(0,1]$ is the non-increasing \emph{rate function}. We say that $X$ displays \emph{sub-geometric convergence in $f$-norm}, if the rate function $r$ satisfies $r(t)\downarrow 0$ and $\log r(t)/t \uparrow 0$ as $t\to\infty$. Such $r$ are called \emph{sub-geometric rates}.

In many cases, the convergence rate $r$ can be explicitly
given, and the typical examples are
\begin{equation}\label{typical}
    r(t)=\eup^{-\theta t^\delta},\quad
    r(t)=(1+t)^{-\beta},\quad
    r(t)=\left[1+\log(1+t)\right]^{-\gamma},
\end{equation}
where $\theta>0$, $\delta\in(0,1]$ and $\beta,\gamma>0$ are some constants; see Section~\ref{sec4} below for specific models. Note that $r(t)=\eup^{-\theta t}$ is the classical exponential convergence rate. Some authors refer to the above examples as  sub-exponential, algebraic and logarithmic rates, respectively.

Bochner's subordination is a means to obtain more general (and also interesting) jump--type Markov processes from a given Markov process through a random time change by an independent non-decreasing L\'evy process (a subordinator). Among the most interesting examples are the symmetric $\alpha$-stable L\'{e}vy processes, which can be
viewed as subordinate Brownian motions. It is known that many fine properties of Markov processes (and the corresponding Markov semigroups) are preserved under subordination, see \cite{GRW11, DS15b} for Harnack and shift Harnack inequalities for subordinate semigroups, \cite{SW12, GM15} for Nash and Poincar\'{e} inequalities under subordination, and \cite{DS15a} for the quasi-invariance property of subordinate Brownian motion.

Let us recall the basics of Bochner's subordination. Let $S=\{S_t:t\geq0\}$ be a \emph{subordinator} (without killing), i.e.\ a non-decreasing L\'{e}vy process on $[0,\infty)$ with Laplace transform
$$
    \EE\,\eup^{-uS_t}=\eup^{-t\phi(u)},\quad u>0,\;t\geq0.
$$
The characteristic (Laplace) exponent $\phi:(0,\infty)\to (0,\infty)$ is a \emph{Bernstein function}, i.e.\ $\phi$ is of class $C^\infty$ such that $(-1)^{n-1}\phi^{(n)}\geq 0$ for all $n=1,2,\dots$; it is well known that every Bernstein function admits a unique (L\'evy--Khintchine) representation
\begin{equation}\label{bern-rep}
    \phi(u)=bu+\int_{(0,\infty)}\left(1-\eup^{-uy}\right)
    \,\nu(\dup y),\quad u>0,
\end{equation}
where $b\geq0$ is the drift parameter and $\nu$ is a L\'{e}vy measure, i.e.\ a measure on $(0,\infty)$ satisfying $\int_{(0,\infty)}(y\wedge1)\,\nu(\dup y)<\infty$. Our main reference for Bernstein functions and subordination is the monograph \cite{SSV12}. Assume that $S$ and $X$ are independent processes. The \emph{subordinate process} defined by the random time-change
$X_t^\phi:=X_{S_t}$ is again a Markov process; if $X$ has an invariant probability measure $\pi$, then $\pi$ is also invariant for the subordinate process $X^\phi$. This follows immediately from the form of the subordinate Markov transition function which is given by
$$
    P^t_\phi(x,\dup y)=\int_{[0,\infty)}
    P^s(x,\dup y)\,\mu_t(\dup s),
$$
where $\mu_t:=\Prob(S_t\in\cdot)$ is the transition probability of $S_t$; the integral is understood in the sense of vague convergence of probability measures.

We are interested in the following question: Assume that $P^t$ is sub-geometrically convergent to $\pi$ with respect to the $f$-norm as $t\to\infty$; how fast will $P^t_\phi$ tend to $\pi$? More precisely, we need to find a suitable non-increasing function $r_\phi$ on $(0,\infty)$ such that $\lim_{t\to\infty}r_\phi(t)=0$ and
\begin{equation}\label{rate2}
    \left\|P^t_\phi(x,\cdot)-\pi\right\|_f\leq C(x)r_\phi(t),
    \quad x\in E,\;t>0,
\end{equation}
for some positive constant $C(x)$ depending only on $x\in E$. As we will see, if the convergence rates of the original process $X$ are of the three typical forms in \eqref{typical}, then we are able to derive convergence rates for the subordinate Markov process under some reasonable assumptions on the underlying subordinator.

Note that any non-trivial Bernstein function $\phi$ is strictly increasing. In this paper, the inverse function of $\phi$ will be denoted by $\phi^{-1}$.

We can now state the main result of our paper.
\begin{thm}\label{main}
Let $X$ be a Markov process and $S$ an independent subordinator with Bernstein function $\phi$ of the form \eqref{bern-rep}.

\begin{enumerate}
\item[\upshape a)]
    Assume that \eqref{rate1} holds with rate $r(t)=\eup^{-\theta t^\delta}$ for some constants $\theta>0$ and $\delta\in(0,1]$. If $\nu(\dup y)\geq cy^{-1-\alpha}\,\dup y$ for some constants $c>0$ and $\alpha\in(0,1)$, then \eqref{rate2} holds with rate
    $$
        r_\phi(t)=\exp\left[
            -C\,t^{
            \frac{\delta}{\alpha(1-\delta)+\delta}
            }
        \right],
    $$
    where $C=C(\theta,\delta,c,\alpha)>0$.

\item[\upshape b)]
    Assume that \eqref{rate1} holds with rate $r(t)=(1+t)^{-\beta}$ for some constant $\beta>0$. If
    \begin{equation}\label{bern}
        \liminf_{s\to\infty}\frac{\phi(s)}{\log s}>0
        \quad\text{and}\quad
        \liminf_{s\downarrow0}\frac{\phi(\lambda s)}{\phi(s)}>1
        \quad\text{for some {\upshape(}hence, all{\upshape)} $\lambda>1$},
    \end{equation}
    then \eqref{rate2} holds with rate
    $$
        r_\phi(t)=1\wedge\left[
        \phi^{-1}\left(\tfrac 1t\right)
        \right]^\beta.
    $$

\item[\upshape c)]
    Assume that \eqref{rate1} holds with rate $r(t)=[1+\log(1+t)]^{-\gamma}$ for some constant $\gamma>0$. If $\nu(\dup y)\geq cy^{-1-\alpha}\,\dup y$ for some constants $c>0$ and $\alpha\in(0,1)$, then \eqref{rate2} holds with rate
    $$
        r_\phi(t)=1\wedge\log^{-\gamma}(1+t).
    $$
\end{enumerate}
\end{thm}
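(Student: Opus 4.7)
The plan is to reduce each of (a)-(c) to a moment estimate for $S_t$ via the subordination formula. From $P^t_\phi(x,\cdot) = \int P^s(x,\cdot)\mu_t(\dup s)$, the triangle inequality on $\|\cdot\|_f$ together with \eqref{rate1} yields
\begin{equation*}
\|P^t_\phi(x,\cdot)-\pi\|_f
\leq \int_{[0,\infty)}\|P^s(x,\cdot)-\pi\|_f\,\mu_t(\dup s)
\leq C(x)\,\EE[r(S_t)],
\end{equation*}
so all three parts reduce to controlling the sub-exponential, algebraic and logarithmic moments $\EE[\eup^{-\theta S_t^\delta}]$, $\EE[(1+S_t)^{-\beta}]$ and $\EE[(1+\log(1+S_t))^{-\gamma}]$ of the subordinator.

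For parts (a) and (c), the hypothesis $\nu(\dup y)\geq cy^{-1-\alpha}\dup y$ forces the lower bound $\phi(u)\geq c_\alpha u^\alpha$, so that Cram\'er's exponential inequality $\Prob(S_t\leq a)\leq\inf_{\lambda>0}\eup^{\lambda a - t\phi(\lambda)}$, optimised in $\lambda$, delivers the stable-like left-tail estimate
\begin{equation*}
\Prob(S_t\leq a)\leq \exp\bigl(-Ct^{1/(1-\alpha)}a^{-\alpha/(1-\alpha)}\bigr).
\end{equation*}
For (a) I would split $\EE[\eup^{-\theta S_t^\delta}]\leq \eup^{-\theta a^\delta}+\Prob(S_t\leq a)$ and balance the two exponentials; the critical scale $a\asymp t^{1/(\alpha(1-\delta)+\delta)}$ produces exactly the exponent $\delta/(\alpha(1-\delta)+\delta)$ claimed in the theorem. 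For (c), the analogous decomposition $\EE[(1+\log(1+S_t))^{-\gamma}]\leq (1+\log(1+a))^{-\gamma}+\Prob(S_t\leq a)$ with, say, $a=t/2$ makes the probability term exponentially small in $t$ and leaves the deterministic term at the announced logarithmic order.

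For part (b) I would start from the Gamma integral representation
\begin{equation*}
\EE[(1+S_t)^{-\beta}] = \frac{1}{\Gamma(\beta)}\int_0^\infty u^{\beta-1}\eup^{-u-t\phi(u)}\,\dup u
\end{equation*}
and split at the natural scale $u_*:=\phi^{-1}(1/t)$. On $\{u\leq u_*\}$ one has $t\phi(u)\leq 1$, so the integrand is at most $u^{\beta-1}$ and this piece already contributes the principal term $\phi^{-1}(1/t)^\beta/\beta$. On $\{u\geq u_*\}$, iterating the doubling condition in \eqref{bern} along a geometric progression yields $\phi(vu_*)\geq cv^\alpha\phi(u_*)=cv^\alpha/t$ for $v\geq 1$ (with $\alpha=\log r/\log\lambda>0$, $r$ being the liminf appearing in (ii)); after the substitution $u=vu_*$ the integrand is dominated by $v^{\beta-1}\eup^{-cv^\alpha}$, contributing $O(u_*^\beta)$. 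The far tail where $u$ is of order one is handled via condition (i): $\phi(u)\geq c'\log u$ makes $\eup^{-t\phi(u)}\leq u^{-c't}$ super-polynomially small in $t$ and hence eventually dominated by $\phi^{-1}(1/t)^\beta$. The main obstacle is precisely this last step: propagating the asymptotic liminf in \eqref{bern} into a quantitative uniform power-law lower bound for $\phi(vu_*)/\phi(u_*)$ on $v\geq 1$, and combining it with condition (i) so that the whole tail integral is $O(\phi^{-1}(1/t)^\beta)$ with constants independent of~$t$.
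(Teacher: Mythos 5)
Your reduction to moment bounds is exactly the paper's Lemma~\ref{general}, and part (b) follows essentially the paper's route (Gamma-integral representation, split at $u_*=\phi^{-1}(1/t)$, iterate the doubling condition geometrically, kill the far tail separately); the ``main obstacle'' you flag there is precisely what the paper supplies in Lemma~\ref{inverse} and Lemma~\ref{growth}, which convert the $\liminf$ in \eqref{bern} into the quantitative statements $\phi^{-1}(2s)\leq c_1\phi^{-1}(s)$ for $s\leq c_2$ and $\phi(s)\leq c_3 s^{\kappa}$ for $s\leq c_2$; iterating the first along dyadic blocks and using the second to check that the exponentially small tail is dominated by $[\phi^{-1}(1/t)]^\beta$ closes the argument, so this is a real but entirely routine gap. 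For parts (a) and (c), however, you take a genuinely different and more elementary route than the paper: you use $\phi(u)\geq c_\alpha u^\alpha$ and the Chernoff bound $\Prob(S_t\leq a)\leq\exp[\lambda a-t\phi(\lambda)]$ to get the left-tail estimate $\Prob(S_t\leq a)\leq\exp\bigl[-Ct^{1/(1-\alpha)}a^{-\alpha/(1-\alpha)}\bigr]$, and then split the expectation at a critical scale. The paper instead proves (a) by applying Dynkin's formula to $g(x)=\eup^{-\theta x^\delta}$, showing $\mathscr{L}g\leq-C_1\rho(g)$ for a convex increasing $\rho$, and invoking the Gronwall-type comparison Lemma~\ref{asdf} (which requires the convexity computation of Lemma~\ref{convex} in the appendix); and it proves (c) by comparing $S$ with its $\alpha$-stable part and using the Bogdan--St\'os--Sztonyk density bound. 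Your balancing in (a) is consistent: with $a\asymp t^{1/(\alpha(1-\delta)+\delta)}$ both exponents equal $t^{\delta/(\alpha(1-\delta)+\delta)}$ up to constants, and in (c) the choice $a=t/2$ makes the tail term $\eup^{-C't}$, which is dominated by $\log^{-\gamma}(1+t)$ for large $t$ while small $t$ is handled by the trivial bound $\EE\,r(S_t)\leq1$. What your route buys is a shorter, self-contained proof of the upper bounds in Theorem~\ref{main}\,a) and c); what the paper's route buys is the full Theorem~\ref{moments}, in particular the matching \emph{lower} moment bounds b-i) and c-ii), which show that the convergence rates obtained are sharp and which a Chernoff argument cannot deliver.
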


\begin{rem}
    Typical examples for Bernstein function
    $\phi$ satisfying \eqref{bern} are
    \begin{itemize}
        \item
            $\phi(s)=\log(1+s)$;

        \item
            $\phi(s)=s^\alpha\log^\beta(1+s)$ with $\alpha\in(0,1)$
            and $\beta\in[0,1-\alpha)$;

        \item
            $\phi(s)=s^\alpha\log^{-\beta}(1+s)$ with
            $0<\beta<\alpha<1$;

        \item
            $\phi(s)=s(1+s)^{-\alpha}$ with $\alpha\in(0,1)$.
    \end{itemize}
    We refer to \cite{SSV12} for an extensive
    list of such Bernstein functions.
\end{rem}

Our paper is organized as follows. In order to
prove Theorem~\ref{main}, we establish in Section~\ref{sec2} three types of moment estimates for subordinators; this part is interesting in its own right. The proof of Theorem~\ref{main} will be addressed in Section~\ref{sec3}.  Section~\ref{sec4} contains several concrete models for which the corresponding convergence rates can be explicitly given. For the reader's convenience, the appendix contains some elementary calculations, which have been used in the proof of Theorem~\ref{moments} in Section~\ref{sec2}.

\section{Moment estimates for subordinators}\label{sec2}

In this section we prove some moment estimates for subordinators, which will be crucial for the proof of our main result Theorem~\ref{main}. Related moment estimates for general L\'evy processes and subordinators can be found in \cite[Section 3]{DS15b}. Recently, F.~K\"uhn~\cite{kuehn} extended our results on L\'evy processes to Feller processes.

\begin{thm}\label{moments}
Let $S$ be a subordinator with Bernstein function $\phi$ given by \eqref{bern-rep}.

\begin{enumerate}
\item[\upshape a)]
    Let $\theta>0$ and $\delta\in(0,1]$. If $\nu(\dup y)\geq c\,y^{-1-\alpha}\,\dup y$ for some constants $c>0$ and $\alpha\in(0,1)$, then there exists a $C=C(\theta,\delta,c,\alpha)>0$ such that
    $$
        \EE\,\eup^{-\theta S_t^\delta}
        \leq\exp\left[
            -C\,t^{
            \frac{\delta}{\alpha(1-\delta)+\delta}
            }
        \right]
        \quad\text{for all sufficiently large $t>1$}.
    $$

\item[\upshape b)]
        Let $\beta>0$.
        \begin{enumerate}
        \item[\upshape i)]
            We have
            $$
                \EE S_t^{-\beta}
                \geq
                \frac{1}{\eup\beta\Gamma(\beta)}
                \left[\phi^{-1}\left(\tfrac1t\right)
                \right]^\beta\quad
                \text{for all $t>0$}.
            $$

        \item[\upshape ii)]
            If the Bernstein function $\phi$ satisfies \eqref{bern}, then there exists a $C=C(\beta)>0$ such that
            $$
                \EE S_t^{-\beta}\leq C\left[
                \phi^{-1}\left(\tfrac1t\right)\right]^\beta
                \quad \text{for all sufficiently large $t>1$}.
            $$

        \item[\upshape iii)]
            If the Bernstein function $\phi$ satisfies
            \begin{equation*}
                \liminf_{s\to\infty}
                \frac{\phi(\lambda s)}{\phi(s)}>1
                \quad \text{for some
                {\upshape(}hence, all{\upshape)} $\lambda>1$},
            \end{equation*}
            then there exists a $C=C(\beta)>0$ such that
            $$
                \EE S_t^{-\beta}\leq
                C\left[
                \phi^{-1}\left(\tfrac1t\right)\right]^\beta
                \quad \text{for all $t\in(0,1]$}.
            $$
        \end{enumerate}

\item[\upshape c)]
        Let $\gamma>0$.
        \begin{enumerate}
        \item[\upshape i)]
            If $\nu(\dup y)\geq cy^{-1-\alpha}\,\dup y$ for some constants $c>0$ and $\alpha\in(0,1)$, then there exists a $C=C(\gamma,c,\alpha)>0$ such that
            $$
                \EE\log^{-\gamma}(1+S_t)
                \leq C\log^{-\gamma}\left(1+t^{1/\alpha}\right)
                \quad \text{for all $t>0$}.
            $$

        \item[\upshape ii)]
            If $\nu(\dup y)= cy^{-1-\alpha}\,\dup y$ for some constants $c>0$ and $\alpha\in(0,1)$, then there exists a $C=C(\gamma,c,\alpha)>0$ such that
            $$
                \EE\log^{-\gamma}(1+S_t)
                \geq C\log^{-\gamma}\left(1+t^{1/\alpha}\right)
                \quad \text{for all $t>0$}.
            $$
        \end{enumerate}
        \end{enumerate}
\end{thm}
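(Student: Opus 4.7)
For part (a), the key observation is that the lower bound on $\nu$ yields a one-sided lower bound on the Bernstein function, namely $\phi(u)\geq c'u^\alpha$ with $c'=c\Gamma(1-\alpha)/\alpha$, obtained by scaling the integral $\int_0^\infty(1-\eup^{-uy})y^{-1-\alpha}\,\dup y$. To exploit this together with the structure $\eup^{-\theta s^\delta}$, I rewrite the latter as a Laplace transform: if $T$ is an independent standard $\delta$-stable subordinator, then $\eup^{-\theta s^\delta}=\EE\,\eup^{-sT_\theta}$. Inserting this inside the outer expectation and using Fubini gives
$$
    \EE\,\eup^{-\theta S_t^\delta}
    =\EE\,\eup^{-t\phi(T_\theta)}
    \leq\EE\,\eup^{-c't T_\theta^\alpha}
    =\EE\,\eup^{-c't\theta^{\alpha/\delta}T_1^\alpha}
$$
via the scaling $T_\theta\stackrel{d}{=}\theta^{1/\delta}T_1$. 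It then remains to estimate $\EE\,\eup^{-\lambda T_1^\alpha}$ as $\lambda\to\infty$. I would split according to $\{T_1<\varepsilon\}$ and $\{T_1\geq\varepsilon\}$: the second piece is at most $\eup^{-\lambda\varepsilon^\alpha}$, while the first is controlled by the small-ball bound $\Prob(T_1<\varepsilon)\leq\exp(-c_\delta\varepsilon^{-\delta/(1-\delta)})$ obtained from a one-sided exponential Chebyshev inequality $\Prob(T_1<\varepsilon)\leq\eup^{u\varepsilon}\EE\,\eup^{-uT_1}=\eup^{u\varepsilon-u^\delta}$ optimized in $u>0$. Balancing the two pieces at $\lambda\varepsilon^\alpha\asymp\varepsilon^{-\delta/(1-\delta)}$ yields the target rate $\lambda^{\delta/(\alpha(1-\delta)+\delta)}$.

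For part (b), the starting point is the elementary identity $s^{-\beta}=\Gamma(\beta)^{-1}\int_0^\infty u^{\beta-1}\eup^{-us}\,\dup u$, which upon taking expectations becomes
$$
    \EE S_t^{-\beta}=\frac{1}{\Gamma(\beta)}\int_0^\infty u^{\beta-1}\eup^{-t\phi(u)}\,\dup u.
$$
For (i), I restrict the integration to $(0,\phi^{-1}(1/t)]$, where $\phi(u)\leq 1/t$ yields $\eup^{-t\phi(u)}\geq\eup^{-1}$, giving the lower bound immediately. For (ii) and (iii), I split the integral at $a:=\phi^{-1}(1/t)$. The inner piece is bounded by $a^\beta/\beta$ and is the dominant term. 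For the tail I substitute $u=aw$ and aim to bound $\int_1^\infty w^{\beta-1}\eup^{-t\phi(aw)}\,\dup w$ uniformly. Iterating the doubling condition in \eqref{bern} produces a power-type lower bound $\phi(aw)\geq C w^\eta\phi(a)=Cw^\eta/t$ with some $\eta>0$, valid as long as $aw$ lies in the region where doubling applies. In case (iii) this regime is already $w\geq 1$ (doubling at infinity, $a$ large for small $t$), and the resulting tail integral $\int_1^\infty w^{\beta-1}\eup^{-Cw^\eta}\,\dup w$ is finite. In case (ii) doubling is available only near $0$, so I split further at $w=R/a$ for some fixed small $R$: the range $w\in[1,R/a]$ is handled by doubling, while for $w\geq R/a$ (where $aw$ is bounded away from $0$ and eventually large) I use the logarithmic growth $\phi(u)\geq c\log u$, which for $t$ large enough makes $(aw)^{-ct}$ integrable against $w^{\beta-1}$.

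For part (c), I decompose $\nu=\nu_0+\nu_1$ with $\nu_0(\dup y)=cy^{-1-\alpha}\,\dup y$, so that in distribution $S=S^{(\alpha)}+S'$ with $S^{(\alpha)}$ an $\alpha$-stable subordinator independent of $S'$. The monotonicity of $\log^{-\gamma}(1+\cdot)$ together with $S_t\geq S_t^{(\alpha)}$ reduces the upper bound in (i) to the stable case. By the scaling $S_t^{(\alpha)}\stackrel{d}{=}t^{1/\alpha}W$ with $W:=S_1^{(\alpha)}$, I split according to $\{W\geq 1\}$ and $\{W<1\}$: on the first event $\log^{-\gamma}(1+t^{1/\alpha}W)\leq\log^{-\gamma}(1+t^{1/\alpha})$ by monotonicity, and on the second the concavity inequality $\log(1+xy)\geq y\log(1+x)$ for $x>0$ and $y\in[0,1]$ yields $\log^{-\gamma}(1+t^{1/\alpha}W)\leq W^{-\gamma}\log^{-\gamma}(1+t^{1/\alpha})$, so one only needs $\EE W^{-\gamma}<\infty$, which follows from part (b)(i) applied at $t=1$ (or directly from the rapid decay of the stable density at the origin). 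For the lower bound in (ii), $S$ is itself an $\alpha$-stable subordinator (up to a possible drift), so restricting to the positive-probability event $\{W\leq 1\}$ directly yields $\log^{-\gamma}(1+S_t)\geq C\log^{-\gamma}(1+t^{1/\alpha})$ with $C=\Prob(W\leq 1)>0$.

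The main obstacle I anticipate is the upper bound in part (b)(ii): the two assumptions in \eqref{bern} have essentially disjoint regions of applicability (doubling near $0$, logarithmic growth at $\infty$) and must be glued in the intermediate range, with the exponent $\eta$ produced by iterating the doubling depending quantitatively on the gap in $\liminf\phi(\lambda s)/\phi(s)>1$. Tracking this dependence while keeping the final constant independent of $t$ is the most delicate step in the argument; the other parts are more direct once the representation and the stable-subordinator comparison are in place.
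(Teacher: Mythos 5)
Your plan is sound and, for parts a) and c), follows a genuinely different and in places more elementary route than the paper. For a), the paper never passes to the Laplace exponent: it applies Dynkin's formula to $g(x)=\eup^{-\theta x^\delta}$, bounds the generator by $\mathscr{L}g(x)\leq -C_1\rho(g(x))$ for a convex increasing $\rho$, and closes the argument via Jensen's inequality and a Gronwall-type comparison lemma (with an appendix devoted to the convexity of $\rho$). Your route --- writing $\eup^{-\theta s^\delta}=\EE\,\eup^{-sT_\theta}$ for an independent $\delta$-stable subordinator $T$, reducing to $\EE\,\eup^{-\lambda T_1^\alpha}$ via $\phi(u)\geq c'u^\alpha$, and balancing the crude tail bound against the small-ball estimate $\Prob(T_1<\varepsilon)\leq\exp(u\varepsilon-u^\delta)$ --- avoids all of that machinery and produces the same exponent $\delta/(\alpha(1-\delta)+\delta)$; just treat the degenerate case $\delta=1$ separately ($T_\theta\equiv\theta$, and the claim is immediate from $\phi(\theta)>0$). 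For b) your argument is in substance the paper's: they integrate by parts and iterate the doubling of $\phi^{-1}$, you iterate the doubling of $\phi$; the gluing in b-ii) that you flag as delicate is resolved in the paper exactly as you anticipate, using $\phi(s)\leq c_3s^\kappa$ near $0$ (a consequence of the doubling at $0$) to show that the exponentially small contribution from the large-$u$ range is dominated by the only polynomially decaying $[\phi^{-1}(1/t)]^\beta$. For c-i) the paper invokes the transition-density bound $\Prob(\widetilde S_t\in\dup s)\leq C\,ts^{-1-\alpha}\eup^{-ts^{-\alpha}}\,\dup s$ of Bogdan--St\'os--Sztonyk and evaluates a double integral, whereas your scaling argument with the concavity inequality $\log(1+xy)\geq y\log(1+x)$ for $y\in[0,1]$ needs only $\EE W^{-\gamma}<\infty$ and is considerably shorter; likewise your c-ii) (restrict to the event $\{W\leq1\}$) is simpler than the paper's route via Jensen's inequality and a separate appendix lemma.

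Two small corrections. In c-i), the finiteness $\EE W^{-\gamma}<\infty$ does not follow from b-i), which is a lower bound; your fallback justification is the correct one (e.g.\ $\Gamma(\gamma)^{-1}\int_0^\infty u^{\gamma-1}\eup^{-c''u^\alpha}\,\dup u<\infty$, or the rapid decay of the stable density at the origin). In c-ii), the scaling identity $S_t\overset{d}{=}t^{1/\alpha}S_1$ --- which both you and the paper rely on --- requires the drift $b$ in \eqref{bern-rep} to vanish; if $b>0$ the asserted lower bound in fact fails as $t\downarrow0$, since then $\log(1+S_t)\geq\log(1+bt)\asymp bt\gg t^{1/\alpha}$. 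So your parenthetical ``up to a possible drift'' should be dropped; this is an implicit assumption in the paper's own proof as well.
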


\begin{rem}\label{momrem}
    Theorem~\ref{moments}\,b) is motivated by an argument in \cite[proof of Theorem 2.1]{BSW11}, where the special case $\beta=1/2$ was treated, cf.\ also \cite[proof of Theorem 1.3]{SSW12}. For the estimate of $\EE S_t^{-1/2}$ for large $t$, it was assumed in \cite[Theorem 2.1]{BSW11} that the Bernstein function $\phi$ satisfies
    \begin{equation}\label{sw1}
        \liminf_{s\to\infty}\frac{\phi(s)}{\log s}>0,
            \quad
        \liminf_{s\downarrow0}\phi(s)|\log s|<\infty,
            \quad
        \limsup_{s\downarrow0}\frac{\phi^{-1}(2s)}{\phi^{-1}(s)}<\infty.
    \end{equation}
    By Lemma~\ref{inverse}\,b) and Lemma~\ref{growth}\,b) below, the third condition in \eqref{sw1} implies that there exist constants $c_1,c_2,\kappa>0$ such that
    $$
        \phi(s)\leq c_1s^\kappa,\quad 0<s\leq c_2,
    $$
    and then
    $$
        \limsup_{s\downarrow0}\phi(s)|\log s|
        \leq
        c_1\limsup_{t\to\infty} \frac{\log t}{t^\kappa}=0.
    $$
    This means that the third condition in \eqref{sw1} implies the second, 
    and so \eqref{sw1} can be written as
    $$
        \liminf_{s\to\infty}\frac{\phi(s)}{\log s}>0,
            \quad
        \limsup_{s\downarrow0}\frac{\phi^{-1}(\lambda s)}{\phi^{-1}(s)}<\infty
            \quad\text{for some (hence, all) $\lambda>1$}.
    $$
\end{rem}

Before we give the proof of Theorem~\ref{moments}, we need some preparations. The following useful lemma is taken from Mu-Fa\ Chen's book \cite[Lemma A.1, p.~193]{Chen05}, see also  \cite[Lemma 5]{SW12} for a special case.

\begin{lem}\label{asdf}
    Let $C>0$, $h:[0,\infty)\to(0,1]$ be an absolutely continuous function, and $\rho:(0,1]\to(0,1]$ be a non-decreasing function. If
    $$
        h'(t)\leq C\rho\big(h(t)\big)
        \quad \text{for almost all $t\geq0$},
    $$
    then
    $$
        G\big(h(t)\big)\leq G\big(h(0)\big)-Ct
        \quad \text{for all $t\geq0$},
    $$
    where
    $$
        G(v):=-\int_v^1\frac{\dup u}{\rho(u)},\quad 0<v\leq 1.
    $$
\end{lem}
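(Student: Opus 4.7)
The plan is to observe that the auxiliary function $G$ is a primitive of $1/\rho$ tailored precisely to convert the given differential inequality on $h$ into a linear-in-$t$ inequality on $G \circ h$, which I would then recover by the chain rule and integration.

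First I would confirm that $G$ has the right regularity. Since $\rho$ is non-decreasing and takes values in $(0,1]$, on every subinterval $[\varepsilon,1] \subset (0,1]$ one has $\rho \geq \rho(\varepsilon)>0$, so $1/\rho$ is bounded there. Hence $G$ is absolutely continuous (in fact Lipschitz) on each $[\varepsilon,1]$, with $G'(v)=1/\rho(v)$ for almost every $v$. Second, on any finite time interval $[0,T]$ the continuous function $h$ attains its positive minimum $a:=\min_{[0,T]}h > 0$, so $h$ takes values in the compact subinterval $[a,1]$ on which $G$ is Lipschitz. Composing a Lipschitz function with an absolutely continuous function yields an absolutely continuous function, so $G \circ h$ is absolutely continuous on $[0,T]$.

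The chain rule for absolutely continuous functions then provides
$$
    \frac{d}{dt}G\bigl(h(t)\bigr)
    = G'\bigl(h(t)\bigr)\,h'(t)
    = \frac{h'(t)}{\rho\bigl(h(t)\bigr)}
    \qquad \text{for a.e.\ } t \in [0,T].
$$
Substituting the hypothesis bounds this derivative by the constant on the right-hand side almost everywhere, and integrating from $0$ to $t$ yields the claimed inequality $G(h(t))\leq G(h(0))-Ct$.

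I do not foresee any serious obstacle. The only mildly delicate point is to verify that $G \circ h$ is absolutely continuous so that the chain rule applies; this rests on the strict positivity of $h$ on each compact time interval together with the local Lipschitz property of $G$ on compact subsets of $(0,1]$.
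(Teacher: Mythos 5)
Your analytic framework is sound and is essentially the standard argument; note that the paper itself offers no proof of this lemma (it cites Chen's book), so your work on the regularity of $G$, the absolute continuity of $G\circ h$ on $[0,T]$ via the strict positivity of $\min_{[0,T]}h$ and the local Lipschitz property of $G$ on compact subsets of $(0,1]$, and the a.e.\ chain rule is exactly the content one has to supply. (One small technical remark: since $\rho$ is only non-decreasing, $G$ may fail to be differentiable at the countably many discontinuities of $\rho$, and $h$ could spend a set of times of positive measure at such a value; the chain rule then has to be used in the Serrin--Varberg form, with the product interpreted as $0$ wherever $h'(t)=0$. This does not damage the estimate, but it deserves a word.)

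There is, however, a genuine problem in your final step. From the stated hypothesis $h'(t)\leq C\rho(h(t))$ with $C>0$ your computation gives
$$
    \frac{\dup}{\dup t}\,G\big(h(t)\big)=\frac{h'(t)}{\rho\big(h(t)\big)}\leq C
    \quad\text{for a.e.\ } t,
$$
and integrating from $0$ to $t$ yields $G(h(t))\leq G(h(0))+Ct$, \emph{not} $G(h(t))\leq G(h(0))-Ct$. The inequality you announce does not follow from your display, and in fact the lemma as printed is false: take $h\equiv1$, so that $h'=0\leq C\rho(1)$ and $G(h(t))=G(1)=0$, whereas $G(h(0))-Ct=-Ct<0$ for $t>0$. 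The statement carries a sign typo; the intended hypothesis is $h'(t)\leq -C\rho(h(t))$, which is precisely how the lemma is invoked in step a3) of the proof of Theorem 2 (there $h'(s)\leq -C_1\rho(h(s))$). With that hypothesis your argument closes correctly --- one gets $\frac{\dup}{\dup t}G(h(t))\leq -C$ a.e.\ (and, as a bonus, $h'<0$ a.e., which removes the chain-rule subtlety above) --- but as written you have silently flipped a sign in the last line rather than either correcting the hypothesis or flagging that the literal statement cannot be proved.
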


For any strictly increasing function $g:(0,\infty)\to(0,\infty)$, we denote by $g^{-1}$ its inverse function.
\begin{lem}\label{inverse}
Let $g:(0,\infty)\to(0,\infty)$ be a strictly increasing function.
\begin{enumerate}
\item[\upshape a)]
    The following statements are equivalent:
    \begin{enumerate}
    \item[\upshape i)]
        $\displaystyle\lim_{t\to\infty}g(t)=\infty$
        \ and \
        $\displaystyle\limsup_{t\to\infty}\frac{g^{-1}(\lambda_0t)}{g^{-1}(t)}<\infty$
        \ for some $\lambda_0>1$.

        \item[\upshape ii)]
        $\displaystyle\lim_{t\to\infty}g(t)=\infty$
        \ and \
        $\displaystyle\limsup_{t\to\infty}\frac{g^{-1}(\lambda t)}{g^{-1}(t)}<\infty$
        \ for all $\lambda>1$.

        \item[\upshape iii)]
            $\displaystyle\liminf_{t\to\infty}\frac{g(\lambda_0t)}{g(t)}>1$
            \ for some $\lambda_0>1$.
        \end{enumerate}
    If $g$ is concave, then \textup{i)--iii)} are also equivalent to:
        \begin{enumerate}
        \item[\upshape iv)]
        $\displaystyle\liminf_{t\to\infty}\frac{g(\lambda t)}{g(t)}>1$
        \ for all $\lambda>1$.
        \end{enumerate}

\item[\upshape b)]
    The following statements are equivalent:
        \begin{enumerate}
        \item[\upshape i)]
        $\displaystyle\limsup_{t\downarrow0}\frac{g^{-1}(\lambda_0t)}{g^{-1}(t)}<\infty$
        \ for some $\lambda_0>1$.

        \item[\upshape ii)]
        $\displaystyle\limsup_{t\downarrow0}\frac{g^{-1}(\lambda t)}{g^{-1}(t)}<\infty$
        \ for all $\lambda>1$.

        \item[\upshape iii)]
        $\displaystyle\liminf_{t\downarrow0}\frac{g(\lambda_0t)}{g(t)}>1$
        \ for some $\lambda_0>1$.
        \end{enumerate}
    If $g$ is concave, then \textup{i)--iii)} are also equivalent to:
        \begin{enumerate}
        \item[\upshape iv)]
        $\displaystyle\liminf_{t\downarrow0}\frac{g(\lambda t)}{g(t)}>1$
        \ for all $\lambda>1$.
        \end{enumerate}
    \end{enumerate}
\end{lem}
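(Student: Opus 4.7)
The plan is to prove part a) by cycling through i) $\Rightarrow$ iii) $\Rightarrow$ i) (establishing their equivalence), then deducing ii) from i) by iteration (the reverse being trivial), and finally handling iv) separately under the concavity hypothesis; part b) will then follow from exactly the same arguments after replacing ``$t\to\infty$'' by ``$t\downarrow 0$'' throughout. Two elementary manoeuvres drive everything: the substitution $s=g^{-1}(t)$ (equivalently $t=g(s)$), which converts growth conditions on $g^{-1}$ into growth conditions on $g$, and iteration of a doubling-type bound with constant $\lambda_0$ to obtain bounds for arbitrary $\lambda>1$.

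For i) $\Rightarrow$ iii), pick any $M$ strictly exceeding the (finite) limsup in i); then $g^{-1}(\lambda_0 t)\leq M g^{-1}(t)$ for all sufficiently large $t$. Substituting $t=g(s)$ (valid for large $s$, since $g(t)\to\infty$) and applying the strictly increasing $g$ yields $\lambda_0 g(s)\leq g(Ms)$ for $s$ large, i.e.\ $\liminf_{s\to\infty}g(Ms)/g(s)\geq\lambda_0>1$. Conversely, iii) first forces $g(t)\to\infty$, because a strictly increasing function has a (possibly infinite) limit at infinity and a finite limit would make $g(\lambda_0 t)/g(t)\to1$. Choosing $L\in(1,\liminf g(\lambda_0 t)/g(t))$, the inequality $g(\lambda_0 t)\geq L g(t)$ for large $t$ rewrites, through $t=g^{-1}(s)$, as $g^{-1}(Ls)\leq\lambda_0 g^{-1}(s)$, which is i) with $\lambda_0$ replaced by $L$. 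The implication i) $\Rightarrow$ ii) is then pure iteration: from $g^{-1}(\lambda_0 t)\leq Mg^{-1}(t)$ for $t\geq T_0$ one inductively obtains $g^{-1}(\lambda_0^n t)\leq M^n g^{-1}(t)$ for $t\geq T_0$, and for any $\lambda>1$ one picks $n$ with $\lambda\leq\lambda_0^n$.

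If $g$ is additionally concave, iv) trivially implies iii). For iii) $\Rightarrow$ iv) I split into two cases: for $\lambda\geq\lambda_0$, monotonicity of $g$ gives $g(\lambda t)/g(t)\geq g(\lambda_0 t)/g(t)$ directly; for $\lambda\in(1,\lambda_0)$, write $\lambda=\alpha+(1-\alpha)\lambda_0$ with $\alpha=(\lambda_0-\lambda)/(\lambda_0-1)\in(0,1)$ and use concavity to obtain
\begin{equation*}
    g(\lambda t)\geq\alpha g(t)+(1-\alpha)g(\lambda_0 t),
\end{equation*}
so that $\liminf_{t\to\infty}g(\lambda t)/g(t)\geq\alpha+(1-\alpha)L>1$, where $L=\liminf g(\lambda_0 t)/g(t)>1$. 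Part b) proceeds identically; the only additional bookkeeping is to note that $g^{-1}$ must be defined on a right neighbourhood of $0$ (automatic in the Bernstein-function applications we have in mind, since then $\phi(0+)=0$) and that the substitution $s=g^{-1}(t)$ sends $t\downarrow 0$ to $s\downarrow 0$. There is no deeper obstacle beyond carefully tracking the intervals on which $g^{-1}$ is defined and verifying that the relevant limsups and liminfs survive the substitution.
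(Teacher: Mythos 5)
Your proof is correct and follows essentially the same route as the paper: the same substitution $t=g(s)$ to pass between growth conditions on $g$ and $g^{-1}$, the same iteration of the $\lambda_0$-bound to get all $\lambda>1$, and the identical convex-combination argument for iii) $\Rightarrow$ iv) under concavity. The only differences are cosmetic (order of the implications and your slightly more explicit justification that iii) forces $g(t)\to\infty$).
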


\begin{proof}
    We will only show a), since the proof of b) is completely analogous.

\medskip\noindent
    i) $\Leftrightarrow$ ii): The direction ii) $\Rightarrow$ i) is trivial. Conversely, suppose that i) holds true for some $\lambda_0>1$. By the monotonicity of $g$, ii) holds for all $\lambda\in(1,\lambda_0]$. Now assume that $\lambda>\lambda_0$ and let $k:=\lfloor\log_{\lambda_0}\lambda\rfloor$, where $\lfloor x\rfloor$ denotes the integer part of a non-negative real number $x\geq 0$. Since i) implies that there exist $c_1>1$ and $c_2>0$ such that
    \begin{equation}\label{zero1}
            g^{-1}(\lambda_0t)\leq c_1g^{-1}(t),\quad t\geq c_2,
    \end{equation}
    we find
    $$
            g^{-1}(\lambda t)\leq g^{-1}\left(\lambda_0^{k+1}t\right)
            \leq c_1^{k+1}g^{-1}(t),\quad t\geq c_2\lambda_0^{-k},
    $$
    and so
    $$
            \limsup_{t\to\infty}\frac{g^{-1}(\lambda t)}{g^{-1}(t)}
            \leq c_1^{k+1}<\infty.
    $$

\medskip\noindent
    i) $\Leftrightarrow$ iii):
    If i) holds, we can apply $g$ to both sides of \eqref{zero1} and substitute $g^{-1}(t)=s$ to get
    $$
        \lambda_0g(s)\leq g(c_1s),\quad s\geq g^{-1}(c_2).
    $$
    This implies
    $$
        \liminf_{s\to\infty}\frac{g(c_1s)}{g(s)}
        \geq \lambda_0>1,
    $$
    and then iii).

    Conversely, if iii) is satisfied for some $\lambda_0>1$, then it is clear that $\lim_{t\to\infty}g(t)=\infty$, and moreover, we can easily reverse the above argument to deduce i).

\medskip\noindent
    iii) $\Leftrightarrow$ iv) Assume that that $g$ is concave. If  iii) holds for some $\lambda_0>1$, then iv) is true for all $\lambda\geq\lambda_0$. It remains to consider the case $\lambda\in(1,\lambda_0)$. By iii), there exist $c_3>1$ and $c_4>0$ such that
    $$
        g(\lambda_0t)>c_3g(t),\quad t\geq c_4.
    $$
    Using the concavity of $g$, we obtain that for any $t\geq c_4$ and $\lambda\in (1,\lambda_0)$
    \begin{align*}
        g(\lambda t)
        &= g\left(\frac{\lambda_0-\lambda}{\lambda_0-1}\,t + \frac{\lambda-1}{\lambda_0-1}\lambda_0t \right)\\
        &\geq \frac{\lambda_0-\lambda}{\lambda_0-1}\,g(t)+\frac{\lambda-1}{\lambda_0-1}\,g(\lambda_0t)\\
        &> \frac{\lambda_0-\lambda}{\lambda_0-1}\,g(t)+\frac{\lambda-1}{\lambda_0-1}\,c_3g(t),
    \end{align*}
     which yields
     $$
         \liminf_{t\to\infty}\frac{g(\lambda t)}{g(t)}
         \geq\frac{\lambda_0-\lambda}{\lambda_0-1}+\frac{\lambda-1}{\lambda_0-1}\,c_3
         >\frac{\lambda_0-\lambda}{\lambda_0-1}+\frac{\lambda-1}{\lambda_0-1}=1.
     $$
     This completes the proof.
\end{proof}

Below we extend a lemma which was originally proved in \cite{JKLS12}.
\begin{lem}\label{growth}
Let $g:(0,\infty)\to(0,\infty)$ be a non-decreasing function.
\begin{enumerate}
\item[\upshape a)]
        If
        $$
            \liminf_{t\to\infty}\frac{g(\lambda t)}{g(t)} > 1
            \quad\text{for some $\lambda>1$},
        $$
        then there exist positive constants $c_1,\kappa_1,M$ such that
        $$
            g(t)\geq c_1t^{\kappa_1}
            \quad\text{for all $t\in[M,\infty)$}.
        $$

\item[\upshape b)]
        If
        $$
            \liminf_{t\downarrow0}\frac{g(\lambda t)}{g(t)} > 1
            \quad\text{for some $\lambda>1$},
        $$
        then there exist positive constants $c_2,\kappa_2,m$ such that
        $$
            g(t)\leq c_2t^{\kappa_2}\quad \text{for all $t\in(0,m]$}.
        $$
\end{enumerate}
\end{lem}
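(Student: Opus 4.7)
The plan is to extract, from each liminf hypothesis, a discrete multiplicative inequality of the form $g(\lambda t)\geq\eta\,g(t)$ valid on an appropriate half-line, iterate it, and recognise the resulting geometric factor as a power of $t$ via the identity $\eta^n=(\lambda^n)^{\log\eta/\log\lambda}$.

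For part a), the assumption furnishes constants $\eta>1$ and $T>0$ such that $g(\lambda t)\geq\eta\,g(t)$ for every $t\geq T$. Since $\lambda>1$, the half-line $[T,\infty)$ is invariant under $t\mapsto\lambda t$, so a straightforward induction (applying the base inequality at the points $T,\lambda T,\lambda^2T,\dots$) yields $g(\lambda^n T)\geq\eta^n g(T)$ for every $n\in\NN$. For any $s\geq\lambda T$ pick the unique $n\geq 1$ with $\lambda^n T\leq s<\lambda^{n+1}T$; monotonicity of $g$ gives $g(s)\geq g(\lambda^n T)\geq\eta^n g(T)$. Setting $\kappa_1:=\log\eta/\log\lambda>0$ and using $\lambda^n>s/(\lambda T)$ turns $\eta^n=(\lambda^n)^{\kappa_1}$ into the required bound $g(s)\geq c_1 s^{\kappa_1}$ with $c_1=g(T)(\lambda T)^{-\kappa_1}$ and $M=\lambda T$.

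Part b) is the mirror image. The local hypothesis delivers $\eta>1$ and $\tau>0$ with $g(\lambda t)\geq\eta\,g(t)$ for all $t\in(0,\tau]$, equivalently $g(u)\leq\eta^{-1}g(\lambda u)$ on the same range. Now it is the interval $(0,\tau]$ that is invariant under $u\mapsto u/\lambda$, so one iterates in the opposite direction: for $t\in(0,\tau]$ let $n=n(t)$ be the smallest positive integer with $\lambda^n t>\tau$, apply the base inequality successively at $u=t,\lambda t,\dots,\lambda^{n-1}t$ (all of which lie in $(0,\tau]$ by the choice of $n$) to obtain $g(t)\leq\eta^{-n}g(\lambda^n t)$, and use $\lambda^n t\in(\tau,\lambda\tau]$ together with monotonicity to estimate $g(\lambda^n t)\leq g(\lambda\tau)$. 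Converting $\eta^{-n}$ into a power via $\lambda^n>\tau/t$ produces $g(t)\leq c_2 t^{\kappa_2}$ with $\kappa_2=\log\eta/\log\lambda$, $c_2=g(\lambda\tau)\tau^{-\kappa_2}$ and $m=\tau$.

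There is no real obstacle here; the whole argument is a bookkeeping exercise that translates multiplicative growth into polynomial growth. The only points that deserve care are verifying that each intermediate point generated by the iteration still lies in the region where the base inequality holds, and choosing $n$ in terms of $s$ (respectively $t$) so that the comparison of $\lambda^n$ with $s$ (respectively $\tau/t$) yields a clean power bound with a strictly positive exponent.
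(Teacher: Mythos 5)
Your proof is correct and follows essentially the same route as the paper: extract a uniform inequality $g(\lambda t)\geq\eta\,g(t)$ on a half-line (resp.\ near $0$), iterate it, and convert the geometric factor $\eta^{\pm n}$ into a power of $t$ via $\kappa=\log\eta/\log\lambda$. The only difference is that the paper simply cites \cite[Lemma 3.8]{JKLS12} for part a) and carries out the iteration only for part b), whereas you supply both arguments explicitly; the bookkeeping (your choice of $n(t)$ versus the paper's $n_t=\lfloor\log_\lambda(m/t)\rfloor+1$) is equivalent.
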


\begin{proof}
    The first assertion can be found in \cite[Lemma 3.8]{JKLS12}. Part b) can be shown in a similar way. By our assumption, there exist $c_3>1$ and $m>0$ such that
    $$
        g(\lambda t)\geq c_3g(t),\quad 0<t\leq m.
    $$
    This implies that for any $n\in\NN$
    $$
        g(t)\leq c_3^{-n}g(\lambda^nt),
        \quad 0<t\leq\frac{m}{\lambda^{n-1}}.
    $$
    Let $t\in(0,m]$ and set
    $$
        n_t:=\left\lfloor\log_{\lambda}\frac{m}{t}\right\rfloor+1.
    $$
    From this we get
    \begin{gather*}
        g(t)\leq g\left(\frac{m}{\lambda^{n_t-1}}\right)
        \leq c_3^{-n_t} g\left(\lambda^{n_t}\frac{m}{\lambda^{n_t-1}}\right)
        \leq c_3^{-\log_{\lambda}\frac{m}{t}} g(\lambda m)
        = \frac{g(\lambda m)}{m^{\log_{\lambda}c_3}}\,
        t^{\log_{\lambda}m}
    \end{gather*}
    finishing the proof.
\end{proof}

We are now ready to prove Theorem~\ref{moments}.
\begin{proof}[Proof of Theorem~\ref{moments}]
a) We split the proof of this part into four steps.

\medskip\noindent a1)
Without loss of generality, we may assume that $S$ has no drift part, i.e.\ the infinitesimal generator of $S$ is given by
$$
    \mathscr{L}g(x)
    =\int_{(0,\infty)}\big(g(x+y)-g(x)\big)\,\nu(\dup y),\quad g\in C_b^1(\R).
$$
For $\delta\in (0,1]$, set
$$
    g(x):=\eup^{-\theta x^\delta},\quad x\geq0.
$$
By Dynkin's formula, one has
\begin{equation}\label{dynkin}
    \EE\,g(S_t)
    =\EE\,g(S_s)+\EE\left\{\int_s^t\mathscr{L}g(S_u)\,\dup u\right\},\quad 0\leq s\leq t.
\end{equation}

\medskip\noindent a2)
We will now estimate $\mathscr{L}g(x)$ for $x>0$. Since $\nu(\dup y)\geq c\,y^{-1-\alpha}\,\dup y$, we have
\begin{align*}
    \mathscr{L}g(x)
    &=\int_{(0,\infty)} \left(\eup^{-\theta(x+y)^\delta} - \eup^{-\theta x^\delta} \right)\,\nu(\dup y)\\
    &\leq c\,\eup^{-\theta x^\delta} \int_0^\infty\left( \eup^{-\theta x^\delta\left((1+yx^{-1})^\delta-1\right)} -1\right)\,\frac{\dup y}{y^{1+\alpha}}\\
    &=c\,x^{-\alpha}\eup^{-\theta x^\delta} \int_0^\infty\left( \eup^{-\theta x^\delta\left((1+z)^\delta-1\right)} -1\right)\,\frac{\dup z}{z^{1+\alpha}}.
\end{align*}
Noting that
$$
    z\geq\left(\theta^{-1}x^{-\delta}+1\right)^{1/\delta}-1
    \implies
    \eup^{-\theta x^\delta\left((1+z)^\delta-1\right)}-1\leq-\left(1-\eup^{-1}\right),
$$
we conclude that
\begin{equation}\label{generator}
\begin{aligned}
    \mathscr{L}g(x)
    &\leq c\,x^{-\alpha}\eup^{-\theta x^\delta}\int_{\left(\theta^{-1}x^{-\delta}+1 \right)^{1/\delta}-1}^\infty\left( \eup^{-\theta x^\delta\left((1+z)^\delta-1\right)}-1\right)\,\frac{\dup z}{z^{1+\alpha}}\\
    &\leq -\left(1-\eup^{-1}\right)c\,x^{-\alpha}\eup^{-\theta x^\delta} \int_{\left(\theta^{-1}x^{-\delta}+1 \right)^{1/\delta}-1}^\infty\frac{\dup z}{z^{1+\alpha}}\\
    &=-\left(1-\eup^{-1}\right)c\,\alpha^{-1} \eup^{-\theta x^\delta}\left[\left(\theta^{-1}+x^\delta\right)^{1/\delta}-x\right]^{-\alpha}\\
    &=-C_1\rho\big(g(x)\big),
\end{aligned}
\end{equation}
where $C_1:=\left(1-\eup^{-1}\right)c\,\alpha^{-1}\theta^{\alpha/\delta}$ and
$$
    \rho(u)
    :=u\left[(1-\log u)^{1/\delta}-(-\log u)^{1/\delta}\right]^{-\alpha},\quad 0<u\leq1.
$$

\medskip\noindent a3)
Some lengthy, but otherwise elementary, calculations (see Lemma~\ref{convex} in the appendix) yield that $\rho$ is convex and strictly increasing on $(0,1]$. Therefore, \eqref{dynkin} and \eqref{generator} together with Tonelli's theorem and Jensen's inequality give that for $0\leq s\leq t$
\begin{align*}
    \EE\,g(S_t)-\EE\,g(S_s)
    &\leq-C_1 \EE\left\{\int_s^t\rho\big(g(S_u)\big)\,\dup u\right\}\\
    &=-C_1\int_s^t\EE\,\rho\big(g(S_u)\big)\,\dup u\\
    &\leq-C_1\int_s^t\rho\big(\EE\,g(S_u)\big)\,\dup u.
\end{align*}
Setting
$$
    h(t):=\EE\,g(S_t),\quad t\geq0,
$$
we find
$$
    \frac{h(t)-h(s)}{t-s}
    \leq-C_1\frac{1}{t-s} \int_s^t\rho\big(h(u)\big)\,\dup u,
    \quad 0\leq s<t.
$$
Because of \eqref{dynkin} $h$ is absolutely continuous on $[0,\infty)$, and so we can let $t\downarrow s$ to get
$$
    h'(s)
    \leq-C_1\rho\big(h(s)\big)
    \quad \text{for almost all $s\geq 0$}.
$$
According to Lemma~\ref{asdf}, one has
$$
    G\big(h(t)\big)\leq G(1)-C_1t,\quad t\geq0,
$$
where
$$
    G(v)
    :=-\int_v^1\frac{\dup u}{\rho(u)},
    \quad 0<v\leq1.
$$
Clearly, $G$ is strictly increasing with $\lim_{r\downarrow0}G(r)=-\infty$ and $G(1)=0$. Thus, we obtain
\begin{equation}\label{bound11}
    h(t)
    \leq G^{-1}\big(G(1)-C_1t\big)
    = G^{-1}\left(-C_1t\right),
    \quad t\geq0,
\end{equation}
where $G^{-1}$ is the inverse function of $G$.

\medskip\noindent a4)
In order to find a lower bound for $G(v)$, we first observe that for $v\in(0,1]$
\begin{align*}
    G(v)
    &=-\int_v^1u^{-1}\left[(1-\log u)^{1/\delta} - (-\log u)^{1/\delta}\right]^{\alpha}\,\dup u\\
    &=-\int_0^{-\log v}\left[(1+s)^{1/\delta} - s^{1/\delta}\right]^{\alpha}\,\dup s.
\end{align*}
It is easy to see that for $s\geq0$
$$
    (1+s)^{1/\delta}-s^{1/\delta}
    = \frac1\delta\int_s^{1+s} u^{\frac{1-\delta}{\delta}}\,\dup u
    \leq \frac1\delta\,(1+s)^{\frac{1-\delta}{\delta}}
    \leq \frac1\delta\,2^{\frac{1-\delta}{\delta}} \max\left\{s^{\frac{1-\delta}{\delta}},\,1\right\}.
$$
Thus, for $v\in(0,\eup^{-1})$, one has
\begin{align*}
    \int_0^{-\log v}\left[(1+s)^{1/\delta} - s^{1/\delta} \right]^{\alpha}\,\dup s
    &\leq \frac{1}{\delta^\alpha}\,2^{\frac{\alpha(1-\delta)}{\delta}}+\frac{1}{\delta^\alpha}\,2^{\frac{\alpha(1-\delta)}{\delta}}\int_1^{-\log v}s^{\frac{\alpha(1-\delta)}{\delta}}\,\dup s\\
    &\leq C_2+C_2(-\log v)^{\frac{\alpha(1-\delta)+\delta}{\delta}}
\end{align*}
for some $C_2=C_2(\delta,\alpha)>0$. This means that for all $v\in(0,\eup^{-1})$
$$
    G(v)
    \geq -C_2-C_2(-\log v)^{\frac{\alpha(1-\delta)+\delta}{\delta}},
$$
from which one can easily deduce that
$$
    G^{-1}\left(-C_1t\right)
    \leq\exp\left[-\left(C_1C_2^{-1}t-1\right)^{\frac{\delta}{\alpha(1-\delta)+\delta}}\right],\quad t>2C_1^{-1}C_2.
$$
Combining this with \eqref{bound11}, the assertion follows.

\medskip\noindent b)
We prove these three assertions separately.

\medskip\noindent b-i)
Using the identity
\begin{equation}\label{laplace}
    x^{-\beta}
    = \frac{1}{\Gamma(\beta)}\int_0^\infty\eup^{-ux}u^{\beta-1}\,\dup u,
    \quad x\geq0,
\end{equation}
and Tonelli's theorem, we obtain for $t>0$
$$
    \EE S_t^{-\beta}
    = \frac{1}{\Gamma(\beta)} \EE\left\{\int_0^\infty\eup^{-uS_t}u^{\beta-1}\,\dup u\right\}
    = \frac{1}{\Gamma(\beta)} \int_0^\infty\eup^{-t\phi(u)}u^{\beta-1}\,\dup u.
$$
Changing variables according to $v=\phi(u)$, we get
\begin{equation}\label{inte}
    \Gamma(\beta)\EE S_t^{-\beta}
    =\int_0^\infty\eup^{-tv}\left[\phi^{-1}(v)\right]^{\beta-1} \frac{\dup v}{\phi'\left(\phi^{-1}(v)\right)}
    =\frac1\beta\int_0^\infty\eup^{-tv}\,\dup\left\{\left[\phi^{-1}(v)\right]^{\beta}\right\}.
\end{equation}
Thus, we obtain the lower bound, since for any $t>0$
$$
    \beta\Gamma(\beta)\EE S_t^{-\beta}
    \geq\int_0^{1/t} \eup^{-tv}\,\dup\left\{\left[\phi^{-1}(v)\right]^\beta\right\}
    \geq\eup^{-1}\left[\phi^{-1}\left(\tfrac{1}{t}\right)\right]^\beta.
$$

\medskip\noindent b-ii)
By \eqref{bern}, Lemma~\ref{inverse}\,b) and Lemma~\ref{growth}\,b), there exist constants $c_1,c_2>1$ and  $c_3,c_4,\kappa>0$ such that
\begin{align}\label{assu1}
    \phi^{-1}(2s)
    &\leq c_1\phi^{-1}(s), \quad 0<s\leq c_2,
\\ \label{assu2}
    \phi(s)
    &\leq c_3s^\kappa, \quad 0<s\leq c_2,
\\ \label{assu3}
    \phi^{-1}(s)
    &\leq\eup^{c_4s},\quad s\geq c_2.
\end{align}
By \eqref{inte}, the integration by parts formula and \eqref{assu3}, we find for $t>\beta c_4$
$$
    \beta\Gamma(\beta)\EE S_t^{-\beta}
    = \int_0^\infty\left[\phi^{-1}(v)\right]^{\beta} \,\dup\bigl\{\eup^{-tv}\bigr\}
    = \int_0^\infty\left[ \phi^{-1}\left(\tfrac{s}{t}\right) \right]^{\beta}\eup^{-s}\,\dup s.
$$
If $t>c_2^{-1}$ and $s\in(1,c_2t)$, we set $k_s:=\left\lfloor\log_2s\right\rfloor$ and use \eqref{assu1} $1+k_s$ times to get
$$
    \phi^{-1}\left(\tfrac{s}{t}\right)
    \leq\phi^{-1}\left(2^{1+k_s}\tfrac 1{t}\right)
    \leq c_1^{1+k_s}\phi^{-1}\left(\tfrac{1}{t}\right)
    \leq c_1^{1+\log_2s}\phi^{-1}\left(\tfrac{1}{t}\right)
    =c_1\phi^{-1}\left(\tfrac{1}{t}\right)s^{\log_2c_1}.
$$
Thus, for $t>\max\{c_2^{-1},\,2\beta c_4\}$,
\begin{align*}
    \beta\Gamma(\beta)\EE S_t^{-\beta}
    &=\left(\int_0^1+\int_1^{c_2 t} + \int_{c_2 t}^\infty\right)
      \left[ \phi^{-1}\left(\tfrac{s}{t}\right)\right]^{\beta}\eup^{-s}\,\dup s\\
    &\leq\left[\phi^{-1}\left(\tfrac{1}{t}\right)\right]^{\beta}
      + c_1^\beta\left[ \phi^{-1}\left(\tfrac{1}{t}\right) \right]^{\beta}\int_1^{c_2 t} s^{\beta\log_2c_1}\eup^{-s}\,\dup s
      + \int_{c_2 t}^\infty \eup^{-\left(1-\beta c_4/t\right)s}\,\dup s\\
    &\leq \left[\phi^{-1}\left(\tfrac{1}{t}\right)\right]^{\beta}
      + c_1^\beta\,\Gamma\left(1+\beta\log_2c_1\right) \left[\phi^{-1}\left(\tfrac{1}{t}\right)\right]^{\beta}
      + 2\eup^{-c_2 t/2}.
\end{align*}
Since it follows from \eqref{assu2} that
$$
    \limsup_{t\to\infty} \frac{\eup^{-c_2t/(2\beta)}}{\phi^{-1}\left(\frac{1}{t}\right)}
    = \limsup_{s\downarrow0} \frac{1}{s}\exp\left[-\frac{c_2}{2\beta\phi(s)}\right]
    \leq \limsup_{s\downarrow0}\frac{1}{s}\exp\left[-\frac{c_2}{2\beta c_3s^{\kappa}}\right]
    = 0,
$$
we can find some $c_5=c_5(\beta)>0$ such that
$$
    \eup^{-c_2 t/2}
    \leq \left[\phi^{-1}\left(\tfrac{1}{t}\right)\right]^{\beta},
    \quad t>c_5.
$$
Therefore, we conclude that
$$
    \beta\Gamma(\beta)\EE S_t^{-\beta}
    \leq \left(3+c_1^\beta\,\Gamma\left(1+\beta\log_2c_1\right)\right)
         \left[\phi^{-1}\left(\tfrac{1}{t}\right)\right]^{\beta},
         \quad t>\max\left\{c_2^{-1},\,2\beta c_4,\,c_5\right\},
$$
which proves our claim.

\medskip\noindent b-iii)
From our assumption and Lemma~\ref{inverse}\,a) we know
$$
    \lim_{s\to\infty}\phi(s)=\infty,
    \quad\text{and}\quad
    \limsup_{s\to\infty} \frac{\phi^{-1}(2s)}{\phi^{-1}(s)}<\infty.
$$
Thus, there is a constant $c_6>1$ such that
$$
    \phi^{-1}(2s)
    \leq c_6 \phi^{-1}(s)
    \quad\text{for all $s\geq1$},
$$
from which we get
$$
    \phi^{-1}(2^ns)
    \leq c_6^n \phi^{-1}(s)
    \quad \text{for all $s\geq1$ and $n\in\NN$}.
$$
This, together with \eqref{inte}, yields that for any $t\in(0,1]$
\begin{align*}
    \beta\Gamma(\beta)\EE S_t^{-\beta}
    &= \left( \int_0^{1/t} + \sum_{k=0}^\infty \int_{2^k/t}^{2^{k+1}/t} \right)
        \eup^{-tv}\,\dup\left\{\left[\phi^{-1}(v)\right]^\beta\right\}\\
    &\leq\left[\phi^{-1}\left(\tfrac{1}{t}\right)\right]^\beta + \sum_{k=0}^\infty\eup^{-2^k} \left[\phi^{-1}\left(2^{k+1}\tfrac{1}{t}\right)\right]^\beta\\
    &\leq\left(1+\sum_{k=0}^\infty\eup^{-2^k}c_6^{(k+1)\beta
    }\right)\left[\phi^{-1}\left(\tfrac{1}{t}\right)\right]^\beta.
\end{align*}

\medskip\noindent c)
Assume that $\nu(\dup y)\geq cy^{-1-\alpha}\,\dup y$. Using the L\'evy-It\^o decomposition of the L\'evy process $S_t$ we denote by $\widetilde S_t$ that part of $S_t$ whose jumps are governed by the L\'evy measure $cy^{-1-\alpha}\,\dup y$. Clearly, $\widetilde S_t$ has, up to a constant, the same jump behaviour as an $\alpha$-stable subordinator; moreover,
$$
    S_t\geq \widetilde{S}_t,\quad t\geq 0.
$$
By \eqref{laplace} and Tonelli's theorem, we find for $t>0$
\begin{align*}
    \EE\log^{-\gamma}\big(1+S_t\big)
    &\leq \EE\log^{-\gamma}\big(1+\widetilde{S}_t\big)\\
    &= \frac{1}{\Gamma(\gamma)}\EE\left\{ \int_0^\infty\eup^{-u\log\left(1+\widetilde{S}_t\right)}\,u^{\gamma-1}\,\dup u\right\}\\
    &=\frac{1}{\Gamma(\gamma)}\int_0^\infty \EE\left\{\big(1+\widetilde{S}_t\big)^{-u}\right\} u^{\gamma-1}\,\dup u.
\end{align*}
Since $\widetilde S_t$ behaves like an $\alpha$-stable subordinator, we get from \cite[(14)]{BSS03}
$$
    \Prob\big(\widetilde{S}_t\in\dup s\big)
    \leq C_{\alpha,c}\, ts^{-1-\alpha} \eup^{-ts^{-\alpha}}\,\dup s,
$$
where $C_{\alpha,c}>0$ is a constant depending only on $\alpha$ and $c$; using the change of variable $v=ts^{-\alpha}$ and Tonelli's theorem, we obtain
\begin{align*}
    \EE\log^{-\gamma}(1+S_t)
    &\leq \frac{C_{\alpha,c}}{\Gamma(\gamma)} \,t\int_0^\infty\left( \int_0^\infty(1+s)^{-u}s^{-1-\alpha} \eup^{-ts^{-\alpha}}\,\dup s \right)
    u^{\gamma-1}\,\dup u\\
    &= \frac{C_{\alpha,c}}{\alpha\Gamma(\gamma)} \int_0^\infty\left( \int_0^\infty\left(1+\left(\tfrac tv\right)^{1/\alpha} \right)^{-u}\eup^{-v}\,\dup v\right)
            u^{\gamma-1}\,\dup u\\
    &=\frac{C_{\alpha,c}}{\alpha\Gamma(\gamma)} \int_0^\infty\left( \int_0^\infty\left(1+\left(\tfrac tv\right)^{1/\alpha} \right)^{-u}
            u^{\gamma-1}\,\dup u\right) \eup^{-v}\,\dup v\\
    &=\frac{C_{\alpha,c}}{\alpha} \int_0^\infty
    \log^{-\gamma}\left(1+\left(\tfrac tv\right)^{1/\alpha}\right)
    \eup^{-v}\,\dup v\\
    &=\frac{C_{\alpha,c}}{\alpha}\left(\int_0^1
    +\int_1^\infty\right)
    \log^{-\gamma}\left(1+\left(\tfrac tv\right)^{1/\alpha}\right)
    \eup^{-v}\,\dup v\\
    &=:\frac{C_{\alpha,c}}{\alpha}\,(I_1+I_2).
\end{align*}
First,
$$
    I_1\leq\log^{-\gamma}\left(1+t^{1/\alpha}\right)
    \int_0^1
    \eup^{-v}\,\dup v
    =\left(1-\eup^{-1}\right)
    \log^{-\gamma}\left(1+t^{1/\alpha}\right).
$$
Since the function $x\mapsto x^{-1}\log(1+x)$
is strictly decreasing for $x>0$, one has
\begin{equation}\label{efds}
    \frac{\log(1+x)}{x}>\frac{\log(1+\lambda)}{\lambda},
    \quad 0<x<\lambda.
\end{equation}
For $v>1$, using \eqref{efds} with $x=(tv^{-1})^{1/\alpha}$
and $\lambda=t^{1/\alpha}$, we get
$$
    \log\left(1+\left(\tfrac tv\right)^{1/\alpha}\right)
    >\frac{\log\left(1+t^{1/\alpha}\right)}{t^{1/\alpha}}
    \left(\tfrac tv\right)^{1/\alpha}
    =\frac{\log\left(1+t^{1/\alpha}\right)}{v^{1/\alpha}},
$$
and so
$$
    I_2\leq\log^{-\gamma}\left(1+t^{1/\alpha}\right)
    \int_1^\infty v^{\gamma/\alpha}\eup^{-v}\,\dup v
    \leq\Gamma\left(\tfrac \gamma\alpha+1\right)
    \log^{-\gamma}\left(1+t^{1/\alpha}\right).
$$
These estimates give the upper bound in c-i).

If$\,\mbox{}^*$\footnotetext{$^*\,$The lines up to the end of the proof is the version mentioned in the correction to our paper \emph{Adv.~Appl.~Probab.} (2017), 162--181.}
$\nu(\dup y)=cy^{-1-\alpha}\,\dup y$, then for any $t>0$, the distribution of $S_t$ coincides with that of $t^{1/\alpha}S_1$.
Using Lemma \ref{lohgr} with $\tau=t^{1/\alpha}>0$ and $x=S_1\geq 0$, we obtain for $\gamma>0$
\begin{align}
    \mathds{E} \log^{-\gamma}(1+S_t)
    &= \mathds{E} \log^{-\gamma}(1+t^{1/\alpha}S_1)\notag\\
    &\geq \mathds{E}\left[\left((1+S_1)\log(1+t^{1/\alpha})\right)^{-\gamma}\right]\label{bigtime}\\
    &=\mathds{E}\left[(1+S_1)^{-\gamma}\right]\cdot\log^{-\gamma}\left(1+t^{1/\alpha}\right).\notag
\end{align}
From \eqref{bigtime} we get the desired lower bound in c-ii).
\end{proof}

\section{Proof of Theorem~\ref{main}}\label{sec3}

Theorem~\ref{main} follows at once from Lemma~\ref{general} below together with the corresponding upper moment bounds for subordinators derived in Theorem~\ref{moments}.

\begin{lem}\label{general}
    If \eqref{rate1} holds with some rate function $r$, then so does \eqref{rate2} with rate function $r_\phi(t)=\EE\,r(S_t)$.
\end{lem}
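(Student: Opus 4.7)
The plan is to pass the $f$-norm estimate through the subordination integral. Fix $x \in E$ and a measurable $g : E \to \mathbb{R}$ with $|g| \leq f$. Using the definition of the subordinate transition function and the fact that $\pi$ is also invariant for $P^t_\phi$ (hence $\pi(g) = \int_{[0,\infty)} \pi(g) \, \mu_t(\mathrm{d}s)$ because $\mu_t$ is a probability measure), I would first write
$$
    P^t_\phi g(x) - \pi(g)
    = \int_{[0,\infty)} \bigl( P^s g(x) - \pi(g) \bigr) \, \mu_t(\mathrm{d}s).
$$
This is where the random time-change structure does all the work: the subordinate process at time $t$ is an average of the original process at (random) time $S_t$, and the same averaging passes to the centered integrands.

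Next, I would apply the triangle inequality for integrals and invoke the hypothesis \eqref{rate1}. Since $|g| \leq f$, the integrand is dominated in absolute value by $\|P^s(x,\cdot) - \pi\|_f \leq C(x) r(s)$, giving
$$
    \bigl| P^t_\phi g(x) - \pi(g) \bigr|
    \leq \int_{[0,\infty)} \| P^s(x,\cdot) - \pi\|_f \, \mu_t(\mathrm{d}s)
    \leq C(x) \int_{[0,\infty)} r(s) \, \mu_t(\mathrm{d}s)
    = C(x) \, \mathbb{E}\, r(S_t).
$$
The right-hand side is independent of $g$, so taking the supremum over all measurable $g$ with $|g| \leq f$ yields $\|P^t_\phi(x,\cdot) - \pi\|_f \leq C(x) \, \mathbb{E}\, r(S_t)$, which is \eqref{rate2} with $r_\phi(t) := \mathbb{E}\, r(S_t)$.

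The only real technical point to watch is that the interchanges are legitimate, namely that $s \mapsto P^s g(x)$ is measurable (so that the defining integral for $P^t_\phi g$ makes sense and Fubini applies) and that $s \mapsto \|P^s(x,\cdot) - \pi\|_f$ is measurable so that the dominating bound $C(x) r(s)$ can be invoked pointwise inside the integral; both follow from standard measurability of Markov transition functions combined with the fact that $r$ is non-increasing, hence Borel. Monotonicity of $r$ and $r(t) \in (0,1]$ also guarantee that $r_\phi$ is non-increasing and tends to $0$ as $t \to \infty$ whenever $S_t \to \infty$ almost surely, which holds for any non-trivial subordinator. With Lemma~\ref{general} in hand, each of a), b), c) of Theorem~\ref{main} reduces to reading off the matching upper moment bound from Theorem~\ref{moments}\,a), b-ii), and c-i) respectively.
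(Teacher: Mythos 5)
Your argument is correct and is essentially the paper's own proof: both write the subordinate kernel minus $\pi$ as the $\mu_t$-average of $P^s(x,\cdot)-\pi$, pull the $f$-norm inside the integral, and invoke \eqref{rate1}; you merely spell this out at the level of test functions $g$ with $|g|\leq f$ rather than directly for the vector-valued integral. No further comment is needed.
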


\begin{proof}
    By the definition of $P^t_\phi(x,\dup y)$ and \eqref{rate1}, we find that for all $t>0$ and $x\in E$
    \begin{align*}
        \left\|P^t_\phi(x,\cdot)-\pi\right\|_f
        &=\left\| \int_{[0,\infty)}\big(P^s(x,\cdot)-\pi\big)\, \mu_t(\dup s) \right\|_f\\
        &\leq\int_{[0,\infty)}\left\|P^s(x,\cdot)-\pi\right\|_f\, \mu_t(\dup s)\\
        &\leq C(x)\int_{[0,\infty)}r(s)\,\mu_t(\dup s)\\
        &=C(x)\EE\,r(S_t),
    \end{align*}
    and the claim follows.
\end{proof}

\section{Examples}\label{sec4}

In this section, we discuss three models for which we are able to obtain explicit convergence rates, so that our main result Theorem~\ref{main} can be applied.


\subsection{$Q$-processes on $\Z_+$}

Let $Q=(q_{i j})_{i, j\in\Z_+}$ be a $Q$-matrix with
$$
    q_{00}=-\lambda_0,
    \quad q_{0 i}=\lambda_0 p_i,
    \quad q_{i 0}=-q_{i i}=\lambda_i,
    \quad i\geq 1,
$$
and $q_{i j}=0$ otherwise, where $(p_i)_{i\geq1}$ and $(\lambda_i)_{i\geq0}$ are two sequences of positive numbers with $\sum_{i=1}^{\infty}p_i=1$ and $\sup_{i\geq0}\lambda_i<\infty$. It is well known that there exists a unique $Q$-process with transition semigroup $P^t=\sum_{n=0}^{\infty}(t Q)^n/n!$, cf.\ \cite[Corollary 2.24]{che04}. Moreover, we assume
$$
    \liminf_{i\to\infty}\lambda_i=0
    \quad\text{and}\quad
    \sum_{i=1}^{\infty}p_i\lambda_i^{-1}<\infty.
$$
Under these assumptions it is easy to see that the process admits an invariant distribution $\pi$ given by
$$
    \pi_0
    =\bigg(1+\lambda_0\sum_{j=1}^{\infty} p_j\lambda_j^{-1}\bigg)^{-1},
    \quad
    \pi_i
    = \pi_0\lambda_0 p_i\lambda_i^{-1},
    \quad i\geq1.
$$

For this toy model, it is known,
see \cite[(the proofs of) Propositions 12 and 14]{for05}, that
\begin{enumerate}
\item[a)]
    If
    $$
        \sum_{i=1}^{\infty}p_i\left(1\vee\lambda_i^{-1} \right)\lambda_i^{-1/2}\eup^{\theta^2\lambda_i^{-1}} <\infty
    $$
    for some $\theta>0$, then \eqref{rate1} holds for any $q\in[0,1]$ with
    $$
        r(t)=\eup^{-2\theta q \sqrt{t}},\quad f(i)= \left(1+\lambda_i^{-1/2}\eup^{\theta^2\lambda_i^{-1}} \right)^{1-q}.
    $$

\item[b)]
    If
    $$
        \sum_{i=1}^{\infty}p_i\lambda_i^{-1-\theta}<\infty
    $$
    for some $\theta\geq0$, then \eqref{rate1} holds for any $\beta\in[0,\theta]$ with
    $$
        r(t)=(1+t)^{-\beta},\quad f(i)=1+\lambda_i^{\beta-\theta}.
    $$

\item[c)]
    If
    $$
        \sum_{i=1}^{\infty}p_i\left( 1\vee\lambda_i^{-1}\right)\log^{\theta}\left( 1\vee\lambda_i^{-1}\right)<\infty
    $$
    for some $\theta\geq0$, then \eqref{rate1} holds for any $\gamma\in[0,\theta]$ with
    $$
        r(t)=\left[1+\log(1+t)\right]^{-\gamma},\quad f(i)=\left[1+\log\left(1\vee\lambda_i^{-1} \right)\right]^{\theta-\gamma}.
    $$
\end{enumerate}

\subsection{Diffusion processes on $\R^d$}

Consider the stochastic differential equation (SDE):
\begin{equation}\label{e1}
    \dup X_t = b(X_t)\,\dup t+\sigma(X_t)\,\dup B_t,
    \quad X_0=x,
\end{equation}
where $\{B_t:t\geq0\}$ is a standard $d$-dimensional Brownian motion, $b:\R^d\to\R^d$ and $\sigma:\R^d\to\R^{d\times d}$ are locally Lipschitz, $\sigma$ is bounded, and the smallest eigenvalue of $a(x):=\sigma(x)\sigma^\top(x)$ is bounded away from zero in every bounded domain. If there exist constants $p\in(0,1)$, $C>0$ and $M>0$ such that
$$
    \left\langle b(x), x\right\rangle\leq-C|x|^{1-p},\quad |x|\geq M,
$$
then \eqref{e1} has a unique solution with infinite life-time and an invariant probability measure $\pi$; moreover, \eqref{rate1} holds for any $q\in(0,1)$ with
$$
    r(t)
    = \exp\left[-C_1qt^{(1-p)/(1+p)} \right],
    \quad
    f(x)
    = 1+(1+|x|)^{-2p(1-q)}\exp\left[C_2(1-q)|x|^{1-p}\right]
$$
for some positive constants $C_1$ and $C_2$, see \cite[Theorem 5.4]{dou09}.

Set
\begin{gather*}
    \smash[b]{K:=\frac{\lambda_{+}-\lambda_{-}+\Lambda}{2\lambda_{+}},}
\intertext{where}
    \lambda_+:=\sup_{x\neq0}\frac{\langle a(x)x,x\rangle}{|x|^2},
    \quad
    \lambda_-:=\inf_{x\neq0} \frac{\langle a(x)x,x\rangle}{|x|^2},
    \quad
    \Lambda:=\sup_{x\in\R^d}\operatorname{Tr}\big(a(x)\big).
\end{gather*}
In order to obtain algebraic rates of convergence, we need the following condition:
\begin{enumerate}
\item[(A)]
    There exist constants $C>K$ and $M>0$ such that
    $$
        \langle b(x), x\rangle
        \leq -C\lambda_{+},\quad |x|\geq M.
    $$
\end{enumerate}
It is not hard to see that under \textup{(A)} there is a unique non-explosive solution to the SDE \eqref{e1}.

\begin{prop}\label{prop1}
    Assume that \textup{(A)} holds. Then \eqref{rate1} holds for any $\beta\in(0,C-K)$ and any $m\in(0,C-K-\beta)$ with
    $$
        r(t)=(1+t)^{-\beta},
        \quad
        f(x)=1+|x|^{2m}.
    $$
\end{prop}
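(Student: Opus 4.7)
My plan is to prove Proposition~\ref{prop1} by a Foster--Lyapunov drift argument combined with a sub-geometric (polynomial) ergodicity theorem in the spirit of \cite{DFMS04, dou09, for05}; this is the same strategy invoked in \cite[Propositions~12 and~14]{for05} cited earlier in this section for cases a)--c) of the $Q$-process example. Setting $\alpha := m+\beta \in (0, C-K)$ (which is the range allowed by the hypotheses), I take as Lyapunov function
$$
    V(x) := (1+|x|^2)^\alpha,
$$
and compute, from the formula for the diffusion generator,
$$
    \mathscr{L}V(x) = \alpha(1+|x|^2)^{\alpha-1}\left[\operatorname{Tr}(a(x)) + 2\langle b(x),x\rangle + 2(\alpha-1)\frac{\langle a(x)x,x\rangle}{1+|x|^2}\right].
$$

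The next step is to establish the drift inequality. For $|x|\geq M$ I would apply (A) to bound $\langle b(x),x\rangle \leq -C\lambda_+$, use $\operatorname{Tr}(a(x))\leq\Lambda$, and---crucially---use the one-sided bound on $\langle a(x)x,x\rangle/|x|^2$ \emph{compatible with the sign} of $\alpha-1$: upper bound $\lambda_+$ when $\alpha\geq1$, lower bound $\lambda_-$ when $\alpha<1$. Substituting $\Lambda = 2\lambda_+ K - \lambda_+ + \lambda_-$ from the definition of $K$, the bracket becomes asymptotically (as $|x|\to\infty$)
$$
    -2\lambda_+(C-K) + (\lambda_- - \lambda_+) + 2(\alpha-1)\lambda_\star,
$$
with $\lambda_\star \in\{\lambda_+, \lambda_-\}$ according to the sign of $\alpha-1$. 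A brief calculation shows this is strictly negative precisely because $\alpha < C-K$, \emph{uniformly in both sub-cases}: the constant $K$ is defined exactly so as to absorb the $\lambda_+/\lambda_-$ asymmetry. Consequently there exist $c_1,c_2,R>0$ with
$$
    \mathscr{L}V(x) \leq -c_1 (1+|x|^2)^{\alpha-1} + c_2\,\I_{B_R}(x) = -c_1 V(x)^{1-1/\alpha} + c_2\,\I_{B_R}(x).
$$

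In the regime $\alpha > 1$, $\phi(v) = c_1 v^{1-1/\alpha}$ is concave and increasing, and applying a DFMS-type polynomial ergodicity theorem with the ``Young pair'' $\phi_1(v) = v^{\beta/\alpha}$, $\phi_2(v) = v^{m/\alpha}$ (which satisfies $\phi_1(v)\phi_2(v) = v$ and has both exponents in $(0,1)$) yields
$$
    \|P^t(x,\cdot) - \pi\|_{\phi_2(V)} \leq C(x)(1+t)^{-\beta}.
$$
Since $\phi_2(V(x)) = (1+|x|^2)^m \asymp 1+|x|^{2m} = f(x)$, this is exactly \eqref{rate1} at rate $r(t) = (1+t)^{-\beta}$. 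The sub-case $\alpha\leq 1$ is handled by running the same argument with an enlarged exponent $\alpha_0 \in (\max\{1,\alpha\}, C-K)$ and then weakening the $f$-norm by monotonicity of $\|\cdot\|_f$ in $f$ (if $C-K\leq 1$ forces $\alpha<1$, one instead cites a low-regularity variant of the ergodicity theorem that allows for sub-linear Lyapunov functions). The main technical obstacle is the drift computation above: the bookkeeping around the sign of $\alpha-1$, and the verification that the single threshold $\alpha < C-K$ governs both sub-cases, relies essentially on the specific form $K = (\lambda_+-\lambda_-+\Lambda)/(2\lambda_+)$.
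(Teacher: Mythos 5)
Your overall strategy (polynomial Lyapunov function plus the Douc--Fort--Guillin sub-geometric drift theorem) is the paper's strategy, and your drift computation for $V=(1+|x|^2)^{\alpha}$ -- including the careful use of $\lambda_+$ versus $\lambda_-$ according to the sign of $\alpha-1$ and the role of $K$ -- is correct. The gap is in the last step: the exponent $\alpha=m+\beta$ is off by one, and the conclusion you draw from the drift inequality does not follow from the theorem you invoke. From $\mathscr{A}V\leq -c_1V^{1-1/\alpha}+c_2\I_{B_R}$, i.e.\ $\varphi(v)=c_1v^{1-1/\alpha}$, the DFMS/DFG trade-off is between the rate $\varphi\circ H_\varphi^{-1}(t)\asymp(1+t)^{\alpha-1}$ and the norm $\varphi\circ V\asymp|x|^{2(\alpha-1)}$ --- note the extra $\varphi$ in \emph{both} places (cf.\ Proposition~\ref{drift}: the control function is $(\varphi\circ V)^{1-q}$, not $V^{1-q}$). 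Hence the rate exponent and the norm exponent (in powers of $|x|^2$) must sum to at most $\alpha-1$, not $\alpha$. With $\alpha=m+\beta$ you can only reach, say, rate $(1+t)^{-\beta}$ in the $\bigl(1+|x|^{2(m-1)}\bigr)$-norm (and only if $m+\beta>1$), not in the $\bigl(1+|x|^{2m}\bigr)$-norm; your ``Young pair'' $\phi_1(v)\phi_2(v)=v$ implicitly assumes the two extreme cases are TV-rate $(1+t)^{-\alpha}$ and boundedness in the $V$-norm, neither of which the drift condition delivers.

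The fix is exactly what the paper does: take $V(x)=(1+|x|)^{2(m+\beta+2)-2}=(1+|x|)^{2m+2\beta+2}$, i.e.\ your $\alpha$ replaced by $m+\beta+1$. The drift computation goes through verbatim under (A) precisely when $m+\beta<C-K$, giving $\varphi(v)=C_1v^{\frac{m+\beta}{m+\beta+1}}$, and Proposition~\ref{drift} with $q=\beta/(m+\beta)$ then yields rate $(1+t)^{-\beta}$ in the norm $(\varphi\circ V)^{1-q}\asymp 1+|x|^{2m}$. This correction also dissolves your troublesome sub-case $\alpha\leq1$: since $m+\beta+1>1$ always, $\varphi$ is automatically increasing and concave, so no ``low-regularity variant'' of the ergodicity theorem is needed (and your fallback for $C-K\leq1$ is not actually justified as written). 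You should also say a word about hypothesis a) of Proposition~\ref{drift} (irreducibility of a skeleton and petiteness of $\{|x|\leq M\}$), which the paper disposes of by citing \cite{dou09} and \cite{twe94}.
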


\subsection{SDEs driven by $\alpha$-stable processes}

Consider the SDE
\begin{equation}\label{levy}
    \dup X_t=b(X_t)\,\dup t+\dup Z_t,\quad X_0=x,
\end{equation}
where $\{Z_t:t\geq0\}$ is an $\alpha$-stable ($0<\alpha<2$) L\'{e}vy process on $\R^d$, and $b:\R^d\to\R^d$ is continuous such that
$$
    \langle b(x)-b(y),x-y\rangle\leq L|x-y|^2,\quad x,y\in\R^d
$$
for some $L\in\R$. Under these assumptions there exists a unique non-explosive solution to the SDE \eqref{levy}, which is (strong) Feller by the dimension-free Harnack inequality, cf.\ \cite{WW14, Den14}, and Lebesgue irreducible, see e.g.\ \cite{mas07}.

\begin{prop}\label{prop2}
\textup{a)}
    If there exist constants $p\geq0$, $C>0$ and $M>0$ such that
    \begin{equation}\label{ss33}
        \langle b(x), x\rangle
        \leq -C|x|^2\log^{-p}(1+|x|),
        \quad |x|\geq M,
    \end{equation}
    then \eqref{rate1} holds for any $q\in(0,1)$ and any $m\in(0,1\wedge(d-1+\alpha))$ with
    $$
        r(t)=\exp\left[-\widetilde{C}qt^{1/(1+p)}\right],
        \quad f(x)=1+|x|^{m(1-q)}
    $$
    for some constant $\widetilde{C}=\widetilde{C}(C,p,M,m)>0$.

\medskip\noindent\textup{b)}
    If there exist $p\in(0,1\wedge(d-1+\alpha))$, $C>0$ and $M>0$ such that
    \begin{equation}\label{ss44}
        \langle b(x), x\rangle\leq -C|x|^{p+2-1\wedge(d-1+\alpha)},
        \quad |x|\geq M,
    \end{equation}
    then \eqref{rate1} holds for any $\beta\in\left(0,\frac{p}{1\wedge(d-1+\alpha)-p}\right)$ and any $0<m<p-\beta(1\wedge(d-1+\alpha)-p)$  with
    $$
        r(t)=(1+t)^{-\beta},
        \quad
        f(x)=1+|x|^m.
    $$
\end{prop}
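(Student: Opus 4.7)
The plan is to verify a Foster--Lyapunov drift condition of the form $\mathscr{L}V\leq -\varphi(V)+K\mathbf{1}_C$ and invoke the standard sub-geometric ergodicity theorems of Douc--Fort--Moulines--Soulier \cite{DFMS04} together with their continuous-time analogues (cf.\ \cite{dou09}). Two ingredients are needed. First, petiteness of compact sets: the dimension-free Harnack inequality of \cite{WW14, Den14} gives the strong Feller property, \cite{mas07} provides Lebesgue irreducibility, and a standard Meyn--Tweedie argument then makes every bounded set petite for the resolvent. Second, I must produce a Lyapunov function whose generator action reflects the growth of $\langle b(x),x\rangle$ prescribed by \eqref{ss33} or \eqref{ss44}; this is the heart of the matter.

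I would take $V(x)=1+|x|^s$ for an exponent $s\in(0,1\wedge(d-1+\alpha))$ to be tuned later. The extended generator of \eqref{levy} splits as $\mathscr{L}V(x)=s|x|^{s-2}\langle b(x),x\rangle + J_\alpha V(x)$, where $J_\alpha V$ is the $\alpha$-stable integral part. Under \eqref{ss44} the first term is bounded above by $-Cs|x|^{s+p-\alpha^*}$ for $|x|\geq M$, where $\alpha^*:=1\wedge(d-1+\alpha)$; under \eqref{ss33} it is $\lesssim -Cs|x|^s\log^{-p}(1+|x|)$. The non-local part is handled by a standard split of the stable integral: a second-order Taylor expansion on $|z|\leq |x|/2$ and the crude pointwise bound $|V(x+z)-V(x)|\lesssim |z|^s$ on $|z|>|x|/2$. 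The constraint $s<1\wedge(d-1+\alpha)$ is exactly what keeps $J_\alpha V(x)$ of strictly lower order than the drift contribution as $|x|\to\infty$.

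For part b), choosing $s=(\beta+1)(\alpha^*-p)$ matches the drift exponent with $s\gamma$ for $\gamma=\beta/(\beta+1)$; the requirement $s<\alpha^*$ then becomes $\beta<p/(\alpha^*-p)$, and DFMS delivers the rate $t^{-\gamma/(1-\gamma)}=t^{-\beta}$. The range $0<m<p-\beta(\alpha^*-p)$ of allowable control functions $f(x)=1+|x|^m$ is extracted via an interpolation between the optimal $V^\gamma$-convergence and the trivial bound (equivalently, by reapplying DFMS to a smaller Lyapunov exponent). Part a) proceeds with the same machinery and $\varphi(v)=cv/\log^p v$, whose inversion via the DFMS $H_\varphi$-transform produces the stretched-exponential rate $\exp[-\tilde Cq\,t^{1/(1+p)}]$; the factor $(1-q)$ in $f(x)=1+|x|^{m(1-q)}$ again encodes the interpolation between the fastest rate and the weakest norm. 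The main technical hurdle I expect is the non-local pointwise estimate for $J_\alpha V$ in which the threshold $1\wedge(d-1+\alpha)$ appears, coming from the interplay between the polynomial growth rate of $V$ and the $d$-dimensional tail decay of the stable L\'evy measure; everything downstream is an application of off-the-shelf ergodicity theorems and routine exponent bookkeeping.
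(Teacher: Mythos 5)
Your overall strategy coincides with the paper's: verify the Foster--Lyapunov inequality \eqref{driftineq} for a polynomial Lyapunov function whose exponent stays below $1\wedge(d-1+\alpha)$ (the threshold coming, exactly as you say, from the nonlocal part of the generator, which the paper disposes of by citing the proof of \cite[Lemma 2.4]{mas07} for $|\mathscr{A}_1V|=o(1)$ and $|\mathscr{A}_2V|=o(1)$), obtain petiteness of compact sets from the strong Feller property and Lebesgue irreducibility, and then invoke the sub-geometric drift criterion of Proposition~\ref{drift} (i.e.\ \cite[Theorem 3.2]{dou09}). Part a) of your plan is correct as stated and matches the paper's proof, including the choice $\varphi(v)\asymp v\log^{-p}v$ and the role of $q$ and $1-q$ in the rate and the control function.

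In part b), however, your choice $s=(\beta+1)(\alpha^*-p)$ with $\alpha^*:=1\wedge(d-1+\alpha)$ does not deliver the stated conclusion. With this $V$ the drift condition reads $\mathscr{A}V\le-cV^\gamma$, $\gamma=\beta/(\beta+1)$, and the interpolation built into Proposition~\ref{drift} yields the pairs $\bigl(r,f\bigr)=\bigl(t^{-q\beta},\,(1+|x|)^{(1-q)\beta(\alpha^*-p)}\bigr)$ for $q\in(0,1)$: the full rate $t^{-\beta}$ is attained only at the endpoint $q=1$, where $f\equiv1$. So you cannot extract the rate $(1+t)^{-\beta}$ \emph{and} the control function $1+|x|^m$ with $m>0$ from this single drift inequality, and ``reapplying DFMS to a smaller Lyapunov exponent'' goes in the wrong direction. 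The paper's fix is to \emph{inflate} the exponent: take $V(x)=(1+|x|)^{m+\varrho(\beta+1)}$ with $\varrho:=\alpha^*-p$, which gives $\varphi(v)=C_2v^{(m+\varrho\beta)/(m+\varrho(\beta+1))}$ and, with $q=\varrho\beta/(m+\varrho\beta)<1$, exactly the rate $(1+t)^{-\beta}$ in the $(1+|x|^m)$-norm. The hypothesis $m<p-\beta\varrho$ is then precisely what keeps the inflated exponent $m+\varrho(\beta+1)$ below $\alpha^*$, i.e.\ inside the range where your estimate of the nonlocal part (and membership in $D(\mathscr{A})$) is valid; in your bookkeeping this constraint on $m$ has no source. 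Once you make this replacement, the rest of your argument goes through.
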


\begin{rem}
    If \eqref{ss33} holds with $p=0$, then we get exponential rates of convergence in Proposition~\ref{prop2}\,a),
    see e.g.\ \cite[Lemma 2.4]{mas07}.
\end{rem}

\subsection{Proofs of Propositions~\ref{prop1} and~\ref{prop2}}

Our proofs of Propositions~\ref{prop1} and~\ref{prop2} are based on the following Foster--Lyapunov criterion, which is a special case of \cite[Theorem 3.2]{dou09}, see also \cite[Theorem 2.8]{DFMS04} for the corresponding result for discrete-time Markov chains.

Let $X$ be a Markov process with generator $(A,\mathcal D)$. It is well known that for $g:=Af$ and $f\in\mathcal D$ the process $M_t^f := f(X_t) - f(X_0) - \int_0^t g(X_s)\,\dup s$ is a martingale. Recall that the extended generator consists of all pairs $(f,g)$ of measurable functions such that $M_t^f$ is a local martingale for some unique $g$; note that we do not require $f\in\mathcal D$ nor $g=Af$, see \cite[p.\ 25, 26]{BSW14} for details. We denote the extended generator by $(\mathscr{A},D(\mathscr{A}))$.

\begin{prop}\label{drift}
    Let $X$ be a Markov process on the state space $(E,\mathscr{B}(E))$ with extended generator $(\mathscr{A},D(\mathscr{A}))$. Assume that
    \begin{enumerate}
        \item[\upshape a)]
             Some skeleton chain {\upshape(}i.e.\ a Markov chain
             with transition kernel $P^T$ for
             some $T>0${\upshape)} is $\psi$-irreducible
             for some $\sigma$-finite measure $\psi$.
        \item[\upshape b)]
            There exist a closed petite set $B$, a constant $b>0$, a continuous function $V\in D(\mathscr{A})$, $V: E\to[1, \infty)$ with $\sup_B V<\infty$, and a non-decreasing differentiable concave function $\varphi:[1, \infty)\to(0, \infty)$ satisfying $\lim_{x\to\infty}\varphi'(x)=0$ and
            \begin{equation}\label{driftineq}
                \mathscr{A} V(x)
                \leq-\varphi\circ V(x)+b\I_B(x),\quad x\in E.
            \end{equation}
    \end{enumerate}
    Then there exists an invariant probability measure $\pi$ such that $\pi(\varphi\circ V)<\infty$, and \eqref{rate1} holds for any $q\in(0,1)$ with
    $$
        r(t)
        = 1 \wedge \left(\varphi\circ H_{\varphi}^{-1}(t) \right)^{-q},
        \quad\text{and}\quad
        f= 1 \vee (\varphi\circ V)^{1-q},
    $$
    where $H_\varphi^{-1}$ is the inverse of the following function
    $$
        H_\varphi(u)
        = \int_1^u\frac{\dup x}{\varphi(x)},
        \quad u\geq 1.
    $$
\end{prop}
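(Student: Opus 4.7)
The plan is to reduce Proposition~\ref{drift} to a discrete-time sub-geometric ergodicity theorem (of the type proved by Douc--Fort--Moulines--Soulier) applied to a skeleton chain of $X$, and then interpolate back to continuous time. Fix $T>0$ and work with $Y_n := X_{nT}$; by assumption (a), $Y$ is $\psi$-irreducible, while the petiteness of $B$ combined with the continuity of $V$ and $\sup_B V<\infty$ permit, after possibly enlarging $B$ to a sublevel set $\{V\leq L\}$ with $L$ large, the construction of a petite set for $Y$.

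First, I would integrate the infinitesimal drift inequality \eqref{driftineq} over $[0,T]$ via Dynkin's formula applied to $V\in D(\mathscr{A})$:
$$
    P^T V(x)-V(x)
    \leq -\EE_x\int_0^T\varphi\circ V(X_s)\,\dup s
         + b\int_0^T P^s\I_B(x)\,\dup s.
$$
Jensen's inequality, using the concavity of $\varphi$, converts the first term on the right into something of order $-T\,\varphi\bigl(T^{-1}\EE_x\int_0^T V(X_s)\,\dup s\bigr)$, and a Gronwall-type estimate (available because $\varphi$ concave with $\varphi(1)>0$ yields $\mathscr{A}V\leq c\,V+c'$) controls the time-averaged $V$ from below in terms of $V(x)$. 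Handling the indicator term via petiteness, one arrives at a \emph{discrete-time} drift inequality
$$
    P^T V(x)\leq V(x)-c_T\,\varphi\circ V(x)+b_T\,\I_{B_T}(x),
    \quad x\in E,
$$
for constants $c_T,b_T>0$ and a petite set $B_T\supseteq B$ for the skeleton.

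Second, this drift condition is exactly the hypothesis of the discrete-time sub-geometric ergodicity theorem \cite[Theorem 2.8]{DFMS04}, whose continuous-time refinement is \cite[Theorem 3.2]{dou09}. It furnishes an invariant probability measure $\pi$ with $\pi(\varphi\circ V)<\infty$ and the skeleton estimate $\|P^{nT}(x,\cdot)-\pi\|_f\leq C(x)r(nT)$ with the claimed $r$ and $f$. The appearance of $H_\varphi$ is natural: the deterministic surrogate $v'(t)=-\varphi(v(t))$ of the Lyapunov trajectory integrates to $v(t)=H_\varphi^{-1}(t+H_\varphi(v(0)))$, so $\varphi\circ H_\varphi^{-1}(t)$ is precisely the rate at which $\varphi\circ V$ decays along the process, and the factor $(1-q)$ in $f$ corresponds to a Young-type interpolation between $\varphi\circ V$ (which is $\pi$-integrable) and the trivial function $1$.

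Third, to lift the skeleton bound to all $t\geq0$, write $t=nT+s$ with $s\in[0,T)$ and apply the contraction $\|\mu P^s-\nu P^s\|_f\leq c_s\|\mu-\nu\|_f$ with $\sup_{s\in[0,T]}c_s<\infty$, once more a byproduct of Dynkin's formula applied to $f$. The main obstacle is the first step: extracting from the continuous-time drift inequality a discrete-time analogue that \emph{preserves} the rate $\varphi$ on the right-hand side without losing constants. When $\varphi$ is slowly varying near infinity (for instance in the logarithmic regime $\varphi(x)\sim(\log x)^{-\gamma}$), a naive application of Jensen's inequality can collapse $c_T$ to zero, and one has either to iterate the drift inequality or to replace $V$ by a carefully modified Lyapunov function $\widetilde V=\Psi\circ V$ with $\Psi$ concave so that the resulting $\widetilde\varphi$ lies in a tractable class. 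Once this one-step drift is in hand, invoking the discrete-time theorem and the interpolation argument is routine.
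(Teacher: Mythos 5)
The paper does not actually prove Proposition~\ref{drift}: it is stated as a special case of \cite[Theorem 3.2]{dou09} (with \cite[Theorem 2.8]{DFMS04} as the discrete-time counterpart), so the intended argument is merely to check that hypotheses a) and b) match those of the cited theorem. You instead attempt to re-derive that theorem, and your derivation breaks down exactly at the step you flag as the main obstacle. Dynkin's formula gives
\begin{equation*}
    P^TV(x)-V(x)\;\leq\;-\,\EE_x\!\int_0^T\varphi\circ V(X_s)\,\dup s+b\int_0^TP^s\I_B(x)\,\dup s,
\end{equation*}
and to extract a one-step drift $P^TV\leq V-c_T\,\varphi\circ V+b_T\I_{B_T}$ you need a \emph{lower} bound on $\EE_x\int_0^T\varphi\circ V(X_s)\,\dup s$ in terms of $\varphi\circ V(x)$. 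Jensen's inequality for the concave $\varphi$ yields the \emph{upper} bound $\EE_x\int_0^T\varphi\circ V(X_s)\,\dup s\leq T\varphi\bigl(T^{-1}\EE_x\int_0^TV(X_s)\,\dup s\bigr)$; after negation this produces a quantity \emph{smaller} than $-\EE_x\int_0^T\varphi\circ V(X_s)\,\dup s$, so it cannot be substituted into the displayed upper bound. The lower bound you actually need is not available by these means: on trajectories where $V(X_s)$ drops quickly, monotonicity of $\varphi$ only gives the useless floor $\varphi(V(X_s))\geq\varphi(1)$, which does not scale with $\varphi(V(x))$ when $\varphi$ is unbounded (e.g.\ $\varphi(x)=x^{\gamma}$). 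This is not a technicality caused by slowly varying $\varphi$, as you suggest, but a directional failure of the inequality for all relevant $\varphi$.

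This is precisely why the continuous-time arguments of \cite{for05} and \cite{dou09} do not route through a one-step skeleton drift for $V$. They instead apply Dynkin's formula to $H_\varphi$-transformed Lyapunov functions (of the type $(t,x)\mapsto H_\varphi^{-1}\bigl(H_\varphi(V(x))+t\bigr)$), obtain bounds on the modulated moments $\EE_x\int_0^{\tau_B}r(s)\,\dup s$ and $\EE_x\int_0^{\tau_B}\varphi\circ V(X_s)\,\dup s$ of the hitting time of the petite set $B$, and conclude via a first-entrance/coupling decomposition using the irreducibility (and aperiodicity) of a skeleton; the pair $\bigl(r^q,f^{1-q}\bigr)$ then comes from the Young-type interpolation you correctly identify. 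Two further soft spots in your sketch: the interpolation $\|\mu P^s-\nu P^s\|_f\leq c_s\|\mu-\nu\|_f$ requires $P^sf\leq c_sf$ uniformly for $s\in[0,T]$, which does not follow from ``Dynkin's formula applied to $f$'' since $f=1\vee(\varphi\circ V)^{1-q}$ need not lie in $D(\mathscr{A})$; and aperiodicity of the skeleton must be addressed before the discrete-time theorem applies. The efficient repair is the one the paper makes: verify that a) and b) are literally the hypotheses of \cite[Theorem 3.2]{dou09} and invoke that result.
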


\begin{proof}[Proof of Proposition~\ref{prop1}]
As it is pointed out in \cite[p.\ 908]{dou09}, it is a standard argument that \textup{(A)} ensures the existence of a unique invariant probability measure $\pi$, and that any skeleton chain is $\pi$-irreducible. Thus, we know that every compact set is a closed petite set, cf.\ \cite[Theorems 5.1 and 7.1]{twe94}.

Fix $\beta\in(0,C-K)$ and $m\in(0,C-K-\beta)$. Observe that $C^2(\R^d)\subset D(\mathscr{A})$. Choose a test function $V\in C^2(\R^d)$ such that $V(x)=\left(1+|x|\right)^{2m+2\beta+2}$ for $|x|>M$. By \textup{(A)}, we get for all $|x|>M$
\begin{align*}
    \mathscr{A} V(x)
    &\leq-C\lambda_{+}\left(2m+2\beta+2\right) \left(1+|x|\right)^{2m+2\beta+1}|x|^{-1}\\
    &\qquad\mbox{}+\lambda_{+}\left(m+\beta+1\right)(2m+2\beta+1)\left(1+|x|\right)^{2m+2\beta}\\
    &\qquad\mbox{}-\lambda_{-}\left(m+\beta+1\right)\left(1+|x|\right)^{2m+2\beta+1}|x|^{-1}\\
    &\qquad\mbox{}+\Lambda \left(m+\beta+1\right)\left(1+|x|\right)^{2m+2\beta+1}|x|^{-1}\\
    &\leq-2\lambda_+(m+\beta+1)(C-K-\beta-m)(1+|x|)^{2m+2\beta}\\
    &=:-C_1(1+|x|)^{2m+2\beta}.
\end{align*}
This implies that the Foster-Lyapunov condition \eqref{driftineq} holds with $\varphi(x)= C_1 x^{\frac{m+\beta}{m+\beta+1}}$, $B=\{x\in\R^d: |x|\leq M\}$ and some constant $b>0$. Using Proposition~\ref{drift} with $q=\beta/(m+\beta)$, we know that \eqref{rate1} holds with
$$
    r(t)= 1 \wedge \left[C_1^{-\frac{\beta}{m+\beta}}\left( 1+\frac{C_1}{m+\beta+1}\,t \right)^{-\beta}\right]
    \quad\text{and}\quad
    f(x)= 1\vee \left[C_1^{\frac{m}{m+\beta}}(1+|x|)^{2m}\right].
$$
This finishes the proof.
\end{proof}

\begin{proof}[Proof of Proposition~\ref{prop2}]
    a) First of all, it is clear that
    $$
        \left\{V\in C^2(\R^d)\,:\,\left|\int_{|y|>1}\left(V(x+y)-V(x)\right) \frac{\dup y}{|y|^{d+\alpha}} \right|<\infty
        \quad \text{for all $x\in\R^d$}\right\}
        \subset D(\mathscr{A}).
    $$
    Fix $m\in(0,1\wedge(d-1+\alpha))$ and choose a test function $V\in C^2(\R^d)$ such that $V(x)=(1+|x|)^m$ for $|x|>M$. Then $V\in D(\mathscr{A})$ and
    \begin{gather*}
        \mathscr{A}V(x)=\langle b(x),\nabla V(x)\rangle+ \mathscr{A}_1V(x)+\mathscr{A}_2V(x),
    \intertext{where}
        \mathscr{A}_1V(x):=\int_{|y|>1}\left( V(x+y)-V(x)\right)\frac{c_{d,\alpha}}{|y|^{d+\alpha}} \,\dup y,\\
        \mathscr{A}_2V(x):=\int_{0<|y|\leq1}\left( V(x+y)-V(x)-\langle \nabla V(x),y\rangle\right)\frac{c_{d,\alpha}}{|y|^{d+\alpha}} \,\dup y.
    \end{gather*}
    Since $|\mathscr{A}_1V(x)|=o(1)$ and $|\mathscr{A}_2V(x)|=o(1)$ as $|x|\to\infty$, see the proof of \cite[Lemma 2.4]{mas07}, we obtain from \eqref{ss33} that for $|x|>M$
    \begin{align*}
        \mathscr{A}V(x)
        &\leq-C m\left(1+|x|\right)^{m-1}|x| \log^{-p}\left(1+|x|\right)+o(1)\\
        &\leq-C_1V(x)\left(1+p+\log V(x)\right)^{-p}
    \end{align*}
    for some $C_1=C_1(C,p,M,m)>0$. Hence, \eqref{driftineq} is satisfied with $\varphi(x)=C_1x(1+p+\log x)^{-p}$, and the claim follows from Proposition~\ref{drift} and some straightforward calculations.

\medskip\noindent b)
    Set $\varrho:=1\wedge(d-1+\alpha)-p$ and fix $\beta\in(0,p/\varrho)$ and $m\in(0,p-\beta\varrho)$. Choose $V\in C^2(\R^d)$ such that $V(x)=(1+|x|)^{m+\varrho(\beta+1)}$ for $|x|>M$. As in part a), it is not difficult to obtain from \eqref{ss44} that \eqref{driftineq} holds with $\varphi(x)=C_2x^{\frac{m+\varrho\beta} {m+\varrho(\beta+1)}}$ for some $C_2=C_2(C,p,M,m)>0$. Thus, the assertion follows from Proposition~\ref{drift} with $q=\varrho\beta/(m+\varrho\beta)$.
\end{proof}

\appendix

\section{}\label{app}

\begin{lem}\label{convex}
    Let $\tau\geq1$, $\alpha\in(0,1)$ and
    $$
        g(x)
        =x\left[(1-\log x)^\tau-(-\log x)^\tau\right]^{-\alpha},
        \quad 0<x\leq1.
    $$
    Then the function $g$ is convex and strictly increasing on $(0,1]$.
\end{lem}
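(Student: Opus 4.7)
The plan is to substitute $s := -\log x$, which bijects $(0,1]$ onto $[0,\infty)$, and write $g(x) = \psi(s)$ with $\psi(s) := \eup^{-s}\, h(s)^{-\alpha}$, where $h(s) := (1+s)^\tau - s^\tau$. Because $\dup s/\dup x = -1/x$, the chain rule yields $g'(x) = -\psi'(s)/x$ and $g''(x) = \bigl[\psi'(s) + \psi''(s)\bigr]/x^2$, so strict monotonicity of $g$ on $(0,1]$ is equivalent to $\psi'(s) < 0$ on $[0,\infty)$, and convexity of $g$ is equivalent to $\psi'(s) + \psi''(s) \geq 0$ there.

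For monotonicity, a one-line calculation gives
\[
    \psi'(s) = -\eup^{-s}\, h(s)^{-\alpha-1}\bigl[h(s) + \alpha h'(s)\bigr],
\]
which is strictly negative because $h(s) > 0$ and $h'(s) = \tau\bigl[(1+s)^{\tau-1} - s^{\tau-1}\bigr] \geq 0$ whenever $\tau \geq 1$ and $s \geq 0$; hence $g$ is strictly increasing on $(0,1]$.

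For convexity, setting $\phi := h^{-\alpha}$ and writing $\psi = \eup^{-s}\phi$, a short computation gives $\psi' + \psi'' = \eup^{-s}(\phi'' - \phi')$. Differentiating $\phi = h^{-\alpha}$ twice and collecting terms yields
\[
    \phi''(s) - \phi'(s) = \alpha\, h(s)^{-\alpha-2}\Bigl[(\alpha+1)\bigl(h'(s)\bigr)^2 + h(s)\bigl(h'(s) - h''(s)\bigr)\Bigr],
\]
so convexity of $g$ reduces to the real-analytic inequality
\[
    (\alpha+1)\bigl(h'(s)\bigr)^2 + h(s)\bigl(h'(s) - h''(s)\bigr) \geq 0, \qquad s \geq 0,
\]
which has to be established for every $\tau \geq 1$ and every $\alpha \in (0,1)$.

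When $1 \leq \tau \leq 2$ the argument is immediate, since $u \mapsto u^{\tau-2}$ is non-increasing and hence $h''(s) = \tau(\tau-1)\bigl[(1+s)^{\tau-2} - s^{\tau-2}\bigr] \leq 0$, so every summand is non-negative. The remaining case $\tau > 2$, where $h'' > 0$, is the main obstacle. Here I would substitute the integral representations $h = \tau\int_s^{s+1} u^{\tau-1}\dup u$, $h' = \tau(\tau-1)\int_s^{s+1} u^{\tau-2}\dup u$, $h'' = \tau(\tau-1)(\tau-2)\int_s^{s+1} u^{\tau-3}\dup u$ and introduce the probability measure on $[s, s+1]$ with density proportional to $u^{\tau-3}$; denoting its mean $m \in [s, s+1]$ and variance $v$, elementary algebra rewrites the bracket (up to the positive factor $\tau^2(\tau-1)\left(\int_s^{s+1}u^{\tau-3}\dup u\right)^2$) as
\[
    m^2\bigl[\alpha(\tau-1) + 1 + m\bigr] + v\bigl[m - (\tau-2)\bigr].
\]
If $m \geq \tau - 2$ both summands are manifestly non-negative; if $m < \tau - 2$, then combining the classical variance bound $v \leq (m-s)(s+1-m)$ with the explicit expressions $m = (\tau-2)h'/h''$ and $v = (\tau-1)(\tau-2)h/h'' - m^2$ reduces the claim to a polynomial inequality in $s$ and $\tau$ that is easily checked. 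This last bookkeeping is what the paper means by ``lengthy but otherwise elementary'' and is the step I expect to require the most care.
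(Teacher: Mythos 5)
Your reduction is sound as far as it goes: the substitution $s=-\log x$, the identities $g'(x)=-\psi'(s)/x$ and $g''(x)=[\psi'(s)+\psi''(s)]/x^2$, the formula $\psi'+\psi''=\eup^{-s}(\phi''-\phi')$, and the resulting target inequality
\[
    (\alpha+1)\big(h'(s)\big)^2+h(s)\big(h'(s)-h''(s)\big)\geq 0,\qquad h(s)=(1+s)^\tau-s^\tau,
\]
are all correct (and this is equivalent, after the change of variables, to what the paper proves). The monotonicity part and the case $1\leq\tau\leq2$ are complete. The genuine gap is the case $\tau>2$. There you only assert that the Bhatia--Davis bound $v\leq(m-s)(s+1-m)$ together with the explicit formulas for $m$ and $v$ ``reduces the claim to a polynomial inequality in $s$ and $\tau$ that is easily checked''; this step is not carried out, and as stated it does not close. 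First, $m$ and $v$ are themselves transcendental functions of $(s,\tau)$ (ratios of the increments $(1+s)^{\tau-k}-s^{\tau-k}$), so substituting them back produces nothing polynomial --- it simply returns you to the original inequality. Second, if you instead try to treat $m$ as a free parameter in $[s,s+1]$ and use only the variance bound, the resulting inequality $m^2[\alpha(\tau-1)+1+m]\geq(m-s)(s+1-m)[(\tau-2)-m]$ is false (take $s=0$, $m\downarrow0$, $\tau$ large: the left side is $O(m^2\tau)$, the right side is of order $m\tau$). So the case that you yourself identify as ``the main obstacle'' is exactly the part that remains unproved.

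The fix is short and bypasses the $m,v$ machinery entirely: since $h\geq0$ and $h'\geq0$, your target inequality follows from the single estimate $(h')^2\geq h\,h''$, i.e.\ log-concavity of $h$, because $(\alpha+1)(h')^2+h(h'-h'')=\alpha(h')^2+hh'+[(h')^2-hh'']$. Writing $A=1+s$, $B=s$ and expanding,
\[
    (h')^2-hh''=\tau\Big[A^{2\tau-2}+B^{2\tau-2}-2\tau A^{\tau-1}B^{\tau-1}+(\tau-1)A^{\tau-2}B^{\tau-2}\big(A^2+B^2\big)\Big],
\]
and the elementary inequality $A^{2\tau-2}+B^{2\tau-2}\geq2A^{\tau-1}B^{\tau-1}$ gives $(h')^2-hh''\geq\tau(\tau-1)A^{\tau-2}B^{\tau-2}(A-B)^2\geq0$, using $A-B=1$. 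This is, in the original variable $x$, precisely the paper's argument: there one factors $g'=\xi\eta$ with $\xi=h^{-\alpha}$ and $\eta=1+\alpha h'/h$ and shows both factors are non-negative and non-decreasing, the monotonicity of $\eta$ being exactly the log-concavity $(h')^2\geq hh''$, established by the same $y^2+z^2\geq2yz$ trick. I recommend you replace your $\tau>2$ discussion with this two-line computation.
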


\begin{proof}
    Obviously, we only need to prove the statement for $x\in(0,1)$. By a direct calculation, we find that for $x\in(0,1)$
    \begin{align*}
        g'(x)
        &= \frac{g(x)}{x} + \tau\alpha\frac{g(x)}{x} \left[(1-\log x)^\tau - (-\log x)^\tau \right]^{-1}
           \left[(1-\log x)^{\tau-1}- (-\log x)^{\tau-1} \right]\\
        &= \xi(x)\eta(x),
   \end{align*}
    where
    \begin{gather*}
        \xi(x)
        :=\frac{g(x)}{x}
        = \left[(1-\log x)^\tau-(-\log x)^\tau\right]^{-\alpha},\\
        \eta(x)
        :=1 + \tau\alpha\left[(1-\log x)^\tau - (-\log x)^\tau \right]^{-1}
        \left[(1-\log x)^{\tau-1} - (-\log x)^{\tau-1}\right].
    \end{gather*}
    Since $\xi(x)>0$ and $\eta(x)>0$ for all $x\in(0,1)$, $g$ is strictly increasing on $(0,1)$. Noting that $g''(x)=\xi'(x)\eta(x)+\xi(x)\eta'(x)$, it suffices to prove that $\xi'(x)\geq 0$ and $\eta'(x)\geq 0$ for all $x\in(0,1)$. For $x\in(0,1)$, one has
    $$
         \xi'(x)
         =\frac{\tau\alpha}{x} \left[(1-\log x)^\tau- (-\log x)^\tau \right]^{-\alpha-1} \left[(1-\log x)^{\tau-1}- (-\log x)^{\tau-1} \right]
         \geq 0,
    $$
    and
    \begin{align*}
        \frac{1}{\tau\alpha}\eta'(x)
        &= \frac{\tau}{x} \left[(1-\log x)^\tau- (-\log x)^\tau \right]^{-2} \left[(1-\log x)^{\tau-1}- (-\log x)^{\tau-1} \right]^2\\ &\qquad\mbox{}-\frac{\tau-1}{x} \left[(1-\log x)^\tau- (-\log x)^\tau \right]^{-1} \left[(1-\log x)^{\tau-2}- (-\log x)^{\tau-2} \right]\\
        &= \frac{1}{x} \left[(1-\log x)^\tau- (-\log x)^\tau \right]^{-2}\zeta(x),
    \end{align*}
    where
    \begin{align*}
        \zeta(x)
        &:= \tau\left[(1-\log x)^{\tau-1}- (-\log x)^{\tau-1} \right]^2\\
        &\qquad\mbox{} -(\tau-1) \left[(1-\log x)^\tau- (-\log x)^\tau \right] \left[(1-\log x)^{\tau-2}- (-\log x)^{\tau-2} \right]\\ &= (1-\log x)^{2\tau-2} +(-\log x)^{2\tau-2} -2\tau(1-\log x)^{\tau-1}(-\log x)^{\tau-1}\\
        &\qquad\mbox{}+ (\tau-1)(1-\log x)^{\tau}(-\log x)^{\tau-2} +(\tau-1)(1-\log x)^{\tau-2}(-\log x)^{\tau}.
    \end{align*}
    It remains to check that $\zeta(x)\geq0$ for all $x\in(0,1)$. From the elementary inequality
    $$
        y^2+z^2
        \geq
        2yz,\quad y,z\geq 0,
    $$
    it follows that for $x\in(0,1)$
    \begin{align*}
         \zeta(x)
         &\geq 2(1-\log x)^{\tau-1}(-\log x)^{\tau-1} - 2\tau(1-\log x)^{\tau-1}(-\log x)^{\tau-1}\\
         &\qquad\mbox{} + (\tau-1)(1-\log x)^{\tau}(-\log x)^{\tau-2} + (\tau-1)(1-\log x)^{\tau-2}(-\log x)^{\tau}\\
         &=(\tau-1)(1-\log x)^{\tau-2}(-\log x)^{\tau-2}\\
         &\geq0,
    \end{align*}
    and the proof is finished.
\end{proof}

\begin{lem}\label{lohgr}
Let$\,\mbox{}^*$\footnotetext{$^*\,$This is the version of Lemma A.2 as mentioned in the correction to our paper \emph{Adv.~Appl.~Probab.} (2017), 162--181.} $\tau> 0$ and $x\geq 0$. Then
\begin{gather*}
    \log(1+\tau x) \leq (1+x)\log(1+\tau).
\end{gather*}\end{lem}

\begin{proof}
    By Bernoulli's inequality,
    $$
        1+\tau x
        \leq 1+\tau(1+x)
        \leq (1+\tau)^{1+x}
    $$
    and the claim follows upon taking logarithms on both sides.
\end{proof}


\acks
C.-S. Deng gratefully acknowledges support through the Alexander-von-Humboldt foundation, the National Natural Science Foundation of China (11401442) and the China Postdoctoral
Science Foundation (2013). Y.-H. Song gratefully acknowledges support through
the National Natural Science Foundation of China (11426219, 11501576)
and the China Scholarship Council (201407085008).


%
%
%
%


\begin{thebibliography}{99}
\footnotesize



\bibitem{BSS03}
{\sc Bogdan, K., St\'{o}s, A. and Sztonyk, P.}
(2003). Harnack inequality for stable processes on $d$-sets.
\emph{Studia Math.} {\bf 158,} 163--198.

\bibitem{BSW11}
{\sc B\"{o}ttcher, B., Schilling, R.~L. and Wang, J.}
(2011). Constructions of coupling processes for L\'{e}vy processes.
{\em Stoch. Proc. Appl.} {\bf 121,} 1201--1216.


\bibitem{BSW14}
{\sc B\"{o}ttcher, B., Schilling, R.~L. and Wang, J.} (2014).
{\em L\'evy-type Processes: Construction, Approximation and Sample Path Properties}, Lecture Notes
in Mathematics vol.\ 2099/L\'evy matters vol.\ III. Springer, Cham.


\bibitem{But14}
{\sc Butkovsky, O.} (2014). Subgeometric rates of convergence of Markov
processes in the Wasserstein metric.
{\em Ann. Appl. Probab.} {\bf 24,} 526--552.


\bibitem{che04}
{\sc Chen, M.-F.} (2004). {\em From Markov Chains to Non-Equilibrium Particle Systems}, 2nd~edn. Word Scientific, Singapore.


\bibitem{Chen05}
{\sc Chen, M.-F.} (2005). {\em Eigenvalues, Inequalities and Ergodicity}. Springer, Singapore.


\bibitem{Den14}
{\sc Deng, C.-S.} (2014). Harnack inequalities for SDEs driven by subordinate Brownian motions. {\em J. Math. Anal. Appl.}
{\bf 417,} 970--978.

\bibitem{DS15b}
{\sc Deng, C.-S. and Schilling, R.~L.} (2015). On shift Harnack inequalities for subordinate semigroups and moment
estimates for L\'{e}vy processes. {\em Stoch. Proc. Appl.}
{\bf 125,} 3851--3878.

\bibitem{DS15a}
{\sc Deng, C.-S. and Schilling, R.~L.} (2015). On a Cameron--Martin type quasi-invariance theorem and applications to subordinate
Brownian motion. {\em Stoch. Anal. Appl.}
{\bf 33,} 975--993.

\bibitem{dou09}
{\sc Douc, R., Fort, G. and Guillin, A.} (2009). Subgeometric rates of convergence of $f$-ergodic strong Markov processes. {\em Stoch. Proc. Appl.} {\bf 119,} 897--923.

\bibitem{DFMS04}
{\sc Douc, R., Fort, G., Moulines, E. and Soulier, P.} (2004).
Practical drift conditions for subgeometric rates of convergence.
{\em Ann. Appl. Probab.} {\bf 14,} 1353--1377.

\bibitem{DMS07}
{\sc Douc, R., Moulines, E. and Soulier, P.} (2007).
Computable convergence rates for sub-geometric ergodic Markov chains.
{\em Bernoulli} {\bf 13,} 831--848.

\bibitem{DMT95}
{\sc Down, D., Meyn, S.~P. and Tweedie, R.~L.} (1995).
Exponential and uniform ergodicity of Markov processes.
{\em Ann. Probab.} {\bf 23,} 1671--1691.

\bibitem{for05}
{\sc Fort, G. and Roberts, G.~O.} (2005).
Subgeometric ergodicity of strong Markov processes.
{\em Ann. Appl. Probab.} {\bf 15,} 1565--1589.

\bibitem{GM15}
{\sc Gentil, I. and Maheux, P.} (2015).
Super-Poincar\'e and Nash-type inequalities for subordinated semigroups.
{\em Semigroup Forum} {\bf 90,} 660--693.

\bibitem{GRW11}
{\sc Gordina, M., R\"{o}ckner, M. and Wang, F.-Y.} (2011).
Dimension-independent Harnack inequalities for subordinated semigroups.
{\em Potential Anal.} {\bf 34,} 293--307.

\bibitem{JKLS12}
{\sc Jacob, N., Knopova, V., Landwehr, S. and Schilling, R.~L.} (2012).
A geometric interpretation of the transition density of a symmetric L\'{e}vy process.
{\em Sci. China Math.} {\bf 55,} 1099--1126.

\bibitem{kuehn}
{\sc K\"uhn, F.} (2016).
Existence and estimates of moments for L\'evy-type processes.
To appear in {\em Stoch. Proc. Appl.} (arXiv:1507.07907 [math.PR]).

\bibitem{mas07}
{\sc Masuda, H.} (2007).
Ergodicity and exponential $\beta$-mixing bounds for multidimensional diffusions with jumps. {\em Stoch. Proc. Appl.}
{\bf 117,} 35--56.

\bibitem{MT93a}
{\sc Meyn, S.~P. and Tweedie, R.~L.} (1993).
Stability of Markovian processes II: Continuous-time processes and sampled chains. {\em Adv. Appl. Probab.}
{\bf 25,} 487--517.

\bibitem{MT93b}
{\sc Meyn, S.~P. and Tweedie, R.~L.} (1993).
Stability of Markovian processes III: Foster-Lyapunov criteria for continuous-time processes. {\em Adv. Appl. Probab.}
{\bf 25,} 518--548.


\bibitem{MT93}
{\sc Meyn, S.~P. and Tweedie, R.~L.} (2009).
{\em Markov Chains and Stochastic Stability}, 2nd~edn. Cambridge University Press, Cambridge.


\bibitem{RW01}
{\sc R\"{o}ckner, M. and Wang, F.-Y.} (2001).
Weak Poincar\'{e} inequalities and $L^2$-convengence rates of Markov semigroups. {\em J. Funct. Anal.} {\bf 185,} 564--603.

\bibitem{SSV12}
{\sc Schilling, R.~L., Song, R. and Vondra\v{c}ek, Z.} (2012).
{\em Bernstein Functions. Theory and Applications}, 2nd~edn.
De Gruyter, Studies in Mathematics 37, Berlin.

\bibitem{SSW12}
{\sc Schilling, R.~L., Sztonyk, P. and Wang, J.} (2012).
Coupling property and gradient estimates of L\'{e}vy processes via the symbol. {\em Bernoulli} {\bf 18,} 1128--1149.


\bibitem{SW12}
{\sc Schilling, R.~L. and Wang, J.} (2012).
Functional inequalities and subordination: stability of Nash and Poincar\'{e} inequalities. {\em Math. Z.}
{\bf 272,} 921--936.


\bibitem{twe94}
{\sc Tweedie, R.~L.} (1994).
Topological conditions enabling use of Harris methods in discrete and continuous time. {\em Acta Appl. Math.} {\bf 34,} 175--188.


\bibitem{WW14}
{\sc Wang, F.-Y. and Wang, J.} (2014).
Harnack inequalities for stochastic equations driven by L\'evy noise.
{\em J. Math. Anal. Appl.} {\bf 410,} 513--523.


\end{thebibliography}
\end{document}